\documentclass[10pt,leqno, final]{amsart}
\usepackage[colorlinks,linkcolor=blue,citecolor=blue,urlcolor=blue]{hyperref}
\usepackage{amsfonts,amssymb}
\usepackage{color}
\usepackage{graphicx}
\usepackage[notref,notcite]{showkeys}
\usepackage{mathtools}
\usepackage{extarrows}
\usepackage{algorithmic}   
\usepackage{caption}
\usepackage{subcaption}
\usepackage{tikz}
\usepackage{mathrsfs}

\allowdisplaybreaks

\usepackage{todonotes}

\newtheorem{theorem}{Theorem}[section]
\newtheorem{lemma}[theorem]{Lemma}

\newtheorem{proposition}[theorem]{Proposition}

\theoremstyle{definition}

\newtheorem{example}[theorem]{Example}
\newtheorem*{example*}{Example}
\newtheorem{remark}[theorem]{Remark}

\numberwithin{equation}{section}

\usepackage{my_latex_commands}

\newcommand*{\Borel}{\mathfrak{B}}
\newcommand*{\Measure}{\mathfrak{M}}
\newcommand{\cbound}{\underbar{\textup{c}}}

\newcommand{\cost}{c_{\textup{d}}}
\newcommand{\marg}{\mu_2^{\textup{d}}}

\begin{document}

\title[Bilevel Kantorovich Problem, Part I]{Bilevel Optimization of the 
Kantorovich Problem and its Quadratic Regularization \\ Part I: Existence Results}

\thanks{This research was supported by the German Research Foundation (DFG) under grant number~LO 1436/9-1 within the priority program Non-smooth and Complementarity-based Distributed Parameter Systems: Simulation and Hierarchical Optimization (SPP~1962).}

\author{Sebastian Hillbrecht}
\address{Sebastian Hillbrecht, Technische Universit\"at Dortmund, Fakult\"at f\"ur Mathematik, Lehrstuhl X, Vogelpothsweg 87, 44227 Dortmund, Germany}
\email{sebastian.hillbrecht@tu-dortmund.de}

\author{Christian Meyer}
\address{Christian Meyer, Technische Universit\"at Dortmund, Fakult\"at f\"ur Mathematik, Lehrstuhl X, Vogelpothsweg 87, 44227 Dortmund, Germany}
\email{christian2.meyer@tu-dortmund.de}

\subjclass[2010]{49Q22, 90C08, 49J45} 
\date{\today}
\keywords{Optimal transport, Kantorovich problem, bilevel optimization, quadratic regularization}

\begin{abstract} 
    This paper is concerned with an optimization problem governed by 
    the Kantorovich optimal transportation problem. This gives rise to a bilevel optimization problem, which can be reformulated as a mathematical problem with complementarity constraints in the space of regular Borel measures.
    Because of the non-smoothness induced by the complementarity relations, problems of this type are frequently regularized. Here we apply a quadratic regularization of the Kantorovich problem.
    As the title indicates, this is the first part in a series of three papers.
    It addresses the existence of optimal solutions to the bilevel Kantorovich problem and its quadratic regularization, whereas part II and III are dedicated to the convergence analysis for vanishing regularization.
\end{abstract}

\maketitle


\section{Introduction}

This paper is concerned with a bilevel optimization problem with the Kantorovich problem of optimal transport as the lower-level problem. The problem under consideration takes the following form:
\begin{equation}\tag{BK}\label{eq:BK}
    \left\{\quad
    \begin{aligned}
        \inf_{\pi,\mu_1} \quad & \JJ(\pi, \mu_1) \\
        \text{s.t.} \quad & \mu_1\in\Measure(\Omega_1), \quad \mu_1\geq 0,
        \quad \|\mu_1\|_{\Measure(\Omega_1)} = \|\marg\|_{\Measure(\Omega_2)}, \\
        & \pi \in \argmin\left\{\int_\Omega \cost\,\d\varphi
        \colon\varphi\in\Pi(\mu_1,\marg), \,\varphi\geq 0\right\}.
    \end{aligned}
    \right.        
\end{equation}
Herein, $\cost$ is a given cost function measuring the transportation cost and $\marg$ a given marginal on a domain $\Omega_2$. The set $\Pi(\mu_1, \marg)$ denotes the set of feasible transport plans, i.e., regular Borel measures that have $\mu_1$ and $\marg$ as first and second marginal, see \eqref{eq:Pi} below.
The lower level problem thus aims at minimizing the transportation cost among all feasible transport plans associated with $\mu_1$ and $\marg$.
It is Kantorovich's well-known generalization of the famous Monge problem, cf.~\cite{Kan42}.
We refer to \cite{Vil03, Vil09, AG13, San15} for more details on the Kantorovich problem and its application background.
The bilevel optimization problem now consists of varying the first marginal $\mu_1$ such that this marginal, together with an associated optimal transport plan, minimizes a given objective $\JJ$.
The additional constraints on $\mu_1$ in \eqref{eq:BK} ensure that there is at least one optimal transport plan associated with $\mu_1$ so that the feasible set of \eqref{eq:BK} is non-empty.
One possible application for such a bilevel problem could be, for instance, the identification of the first marginal based on measurements $\pi^{\textup{d}}$ of the transport plan on a part $D$ of the domain $\Omega_1 \times \Omega_2$.
In this case, the upper level objective would be of the form $\JJ(\pi, \mu_1) = d(\pi, \pi_{\textup{d}})$ (plus potential regularization terms accounting for errors in the measurement), where $d$ denotes a suitable distance such as $|\pi - \pi^{\textup{d}}|(D)$ or $\|\pi - \pi^{\textup(d)}\|_{W^{-1,p}(D)}$ for some $p > \dim(D)$.

From a bilevel optimization point of view, the Kantorovich problem is challenging.
First, for a given cost $\cost$ and marginals $\mu_1$ and $\marg$, the optimal transport plan needs not to be unique (unless the cost function is strictly convex and at least one of the marginals is absolutely continuous w.r.t.\ the Lebesgue measure, see \cite[Theorem~1.17]{San15}).
Thus, in general, there is no single-valued solution mapping $\mu_1 \mapsto \pi$ associated with the lower level problem in \eqref{eq:BK}.
This prevents us from using the so-called implicit programming approach, where the lower level problem is replaced by its solution operator and the (potentially limited) differentiability properties of the latter are used to derive optimality conditions and optimization algorithms for the bilevel problem, cf.\ e.g.\ \cite{Mig76, CCMW18}.
Alternatively, one could replace the convex lower-level problem by its necessary and sufficient first-order optimality conditions. These, however, contain a complementarity system in the space of regular Borel measures, which turns the bilevel problem into a mathematical program with complementarity constraints (MPCC) in $\Measure(\Omega_1\times \Omega_2)$. 

A common strategy to treat MPCCs is to regularize the complementarity constraints and the lower level problem. We only refer to \cite{LPR96, SS00, HKS13, KS15} in the finite dimensional setting and to \cite{Bar93, HK09, HMW12, SW13, Wac16} for problems in function spaces. These approaches are of theoretical as well as numerical interest.
While a limit analysis for vanishing regularization parameters yields stationarity conditions for the original problem, the regularized problems can often be treated with standard algorithms that, together with a path-following procedure for the regularization parameter, can provide an efficient method for solving an MPCC.
Here, we follow a similar approach and employ a quadratic regularization of the Kantorovich problem, which was proposed and analyzed in \cite{LMM21}.
This regularization has several advantages.
First, the regularized Kantorovich problem is strictly convex and thus uniquely solvable, which is in particularly attractive from the viewpoint of bilevel optimization as it allows the implicit programming approach to be applied.
Moreover, the regularity of the optimal transport plans is improved in a way that we are faced with an MPCC in $L^2(\Omega_1 \times \Omega_2)$ instead of an MPCC in the space of regular Borel measures.
Finally, as shown in \cite{LMM21}, the quadratic regularization preserves essential features of the original Kantorovich problem such as the sparsity of the optimal transport plan as well as a dual problem that provides a substantial reduction of the dimension.
As a price for these desirable properties, the regularized problems still contain a complementarity relation and are therefore not smooth, in contrast to common MPCC regularization approaches.
However, due to their particular structure involving a complementarity system in $L^2(\Omega_1\times \Omega_2)$, we expect that non-smooth optimization algorithms for MPCCs in function space are applicable, see e.g.\ \cite{HU19, CdlRM20}.

This work is the first part in a series of three papers. While the other two contributions address the question of convergence of solutions of the regularized bilevel problems for vanishing regularization parameter, this paper is concerned with the existence of solutions.
For the bilevel Kantorovich problem \eqref{eq:BK} itself, existence of globally optimal solutions is rather straightforward to show, based on known (weak) stability results for the Kantorivich problem.
The situation changes, if one turns to its regularized counterpart. As we will see by means of a counterexample, the solution operator associated with the regularized Kantorovich problem is not weakly continuous. Nevertheless, one can show its strong continuity, which allows us to prove the existence
of solution for the regularized bilevel problems.

The paper is organized as follows:
After introducing some basic notation and assumptions in the rest of this introduction, we collect some known results on the Kantorovich problem and its quadratic regularization in Section~\ref{sec:prelim}.
We then turn to the existence of globally optimal solutions for \eqref{eq:BK} 
in Section~\ref{sec:existBK}.
The main part of the paper is contained in Section~\ref{sec:existBKgam}, where we first verify that the regularized solution operator is locally H\"older continuous and, based on that, show the existence of optimal solutions for the regularized bilevel problems.

\subsection{Notation and Standing Assumptions}\label{sec:assu}
	
\subsubsection*{Domains}	
For $d_1,d_2\in\N$, let $\Omega_1\subset\R^{d_1}$ and $\Omega_2\subset\R^{d_2}$ be compact and connected sets with non-empty interior. We moreover suppose that their Cartesian product $\Omega\coloneqq\Omega_1\times\Omega_2$ coincides with the closure of its interior and has a Lipschitz boundary in the sense of \cite[Def.~1.2.2.1]{Gri85}.
By $\Borel(\Omega)$, we denote the respective Borel $\sigma$-algebra on $\Omega$ and by $\lambda$ the Lebesgue measure on $\Borel(\Omega)$.
For $\Omega_1$ and $\Omega_2$, $\Borel(\Omega_i)$ and $\lambda_i$, $i=1,2$, are defined analogously so that $\lambda = \lambda_1 \otimes \lambda_2$.
Furthermore, we abbreviate $|\Omega_1|\coloneqq\lambda_1(\Omega_1)$, $|\Omega_2|\coloneqq\lambda_2(\Omega_2)$, and $|\Omega|\coloneqq\lambda(\Omega)$.

\subsubsection*{Marginals}
Let $(X, \Borel(X))$ be a measurable space. Then, we denote by $\Measure(X)$ the space of (signed) regular Borel measures on $X$ equipped with the total variation norm, i.e., $\|\mu\|_{\Measure(X)} \coloneqq |\mu|(X)$.
If $\mu_1\in\Measure(\Omega_1)$ and $\mu_2\in\Measure(\Omega_2)$, then 
the set of transport plans between the marginals $\mu_1$ and $\mu_2$ is given by
\begin{equation}\label{eq:Pi}
    \Pi(\mu_1,\mu_2) \coloneqq \{\pi\in\Measure(\Omega)\colon {P_1}_\#\pi=\mu_1~\text{and}~{P_2}_\#\pi=\mu_2\},
\end{equation}
where, for $i=1,2$,
\begin{equation*}
    {P_i}_\#\pi\coloneqq\pi\circ P_i^{-1}\colon\Borel(\Omega_i)\to\R, 
\end{equation*}		
is the pushforward measure of $\pi$ via the projection $P_i\colon\Omega\ni(x_1,x_2)\mapsto x_i\in\Omega_i$. 
Note that $\Pi(\mu_1,\mu_2)=\emptyset$, if $\mu_1(\Omega_1)\neq\mu_2(\Omega_2)$.
Throughout the paper, $\marg \in \Measure(\Omega_2)$ is a fixed marginal satisfying 
$\marg \geq 0$ and, in order to ease notation, $\|\marg\|_{\Measure(\Omega_2)}  = 1$. 
The normalization condition is no restriction and can be ensured by re-scaling.

Given a measure space $(X, \AA, \mu)$, the Lebesgue space of $p$-th power absolutely integrable functions is denoted 
by $L^p(X, \mu)$, $p\in [1, \infty)$. If $X\subset \R^n$, $n\in \N$, is a Lebesgue measurable set and 
$\mu$ is the Lebesgue measure, we simply write $L^p(X)$.

\subsubsection*{Cost Function}
The cost function is assumed to satisfy $\cost \in W^{1,p}(\Omega)$, $p > d_1 + d_2$, 
where, with a slight abuse of notation, $W^{1,p}(\Omega)$ denotes the Sobolev space on $\interior(\Omega)$. 
Note that, due to the regularity of $\partial\Omega$, $W^{1,p}(\Omega)$ is compactly embedded in 
$C(\Omega)$, cf.\ e.g.\ \cite[Theorem 6.3]{AF03}. 
Thus, there exists a continuous representative of $\cost$, which we denote by the same symbol.

\subsubsection*{Bilevel Objective}
The functional $\JJ \colon \Measure(\Omega) \times \Measure(\Omega_1)\to\R$ 
is supposed to be lower semicontinuous w.r.t.\ weak-$\ast$ convergence.
Let us give an application-driven example for such a functional. Suppose that subsets 
$D\in \Borel(\Omega)$ and $D_1\in \Borel(\Omega_1)$ and 
measurements $\pi_\d \in \Measure(D)$ and $\mu_1^\d \in \Measure(D_1)$ are given. 
Then we set
\begin{equation*}
    \JJ(\pi, \mu_1) \coloneqq
    |\pi - \pi_\d|(D) + \nu\, |\mu_1 - \mu_1^\d|(D_1),
\end{equation*}
where $\nu \geq 0$ is a weighting parameter. The goal of the bilevel optimization is then
to adjust $\mu_1$ and $\pi$ such that the deviation between an optimal transport process 
and (possibly inaccurate) measurements thereof on subdomains becomes minimal.


\section{Preliminaries}\label{sec:prelim}

Given marginals $\mu_1 \in \Measure(\Omega_1)$, $\mu_2 \in \Measure(\Omega_2)$ and 
a measurable cost function $c\colon  \Omega \to [-\infty, \infty]$, 
the Kantorovich problem of optimal transport reads
\begin{equation}\tag{KP}\label{eq:KP}
    \left\{\quad
    \begin{aligned}
        \inf \quad & \KK(\pi) \coloneqq \int_\Omega c(x)\, \d \pi(x)\\
        \text{s.t.} \quad & \pi \in \Pi(\mu_1,\mu_2), \quad \pi \geq 0.
    \end{aligned}
    \quad \right.
\end{equation}

\begin{lemma}[{\cite[Theorem 4.1]{Vil09}}]\label{lem:kantexist}
    If $\mu_1, \mu_2 \geq 0$ and $\|\mu_1\|_{\Measure(\Omega_1)} = \|\mu_2\|_{\Measure(\Omega_2)}$
    and if $c$ is lower semicontinuous and bounded from below, then there exists an optimal solution of \eqref{eq:KP}.
\end{lemma}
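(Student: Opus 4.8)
The plan is to prove existence of a minimizer of \eqref{eq:KP} by the direct method of the calculus of variations, exploiting weak-$\ast$ compactness in $\Measure(\Omega)$ together with lower semicontinuity of $\KK$. First I would check that the feasible set $\{\pi \in \Pi(\mu_1,\mu_2) : \pi \geq 0\}$ is non-empty: the product measure $\pi_0 \coloneqq \|\mu_1\|_{\Measure(\Omega_1)}^{-1}\,\mu_1 \otimes \mu_2$ (which is well-defined and non-negative since $\mu_1,\mu_2 \geq 0$, and which has the correct marginals precisely because $\|\mu_1\|_{\Measure(\Omega_1)} = \|\mu_2\|_{\Measure(\Omega_2)}$) is admissible, so the infimum is taken over a non-empty set. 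Since $c$ is bounded from below, say $c \geq -M$ on $\Omega$, and every feasible $\pi$ is a non-negative measure with total mass $\pi(\Omega) = \mu_1(\Omega_1) = \|\mu_1\|_{\Measure(\Omega_1)} =: m$, we get $\KK(\pi) = \int_\Omega c \, \d\pi \geq -Mm > -\infty$, so the infimum is a finite real number. Thus a minimizing sequence $(\pi_n) \subset \Pi(\mu_1,\mu_2)$, $\pi_n \geq 0$, exists.

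Next I would extract a convergent subsequence. Every feasible $\pi_n$ satisfies $\|\pi_n\|_{\Measure(\Omega)} = \pi_n(\Omega) = m$, so the sequence is bounded in $\Measure(\Omega)$. Since $\Omega$ is compact, $\Measure(\Omega)$ is the dual of the separable Banach space $C(\Omega)$, and hence by the Banach–Alaoglu theorem (sequential form, using separability) there is a subsequence, not relabeled, and a measure $\bar\pi \in \Measure(\Omega)$ with $\pi_n \overset{\ast}{\rightharpoonup} \bar\pi$. I then have to verify that $\bar\pi$ is again feasible. Non-negativity passes to the weak-$\ast$ limit: testing $\pi_n \geq 0$ against arbitrary $0 \leq \phi \in C(\Omega)$ gives $\int \phi \, \d\bar\pi = \lim \int \phi \, \d\pi_n \geq 0$, whence $\bar\pi \geq 0$. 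For the marginal constraints, fix $\phi_1 \in C(\Omega_1)$; then $\phi_1 \circ P_1 \in C(\Omega)$ because $P_1$ is continuous, so
\[
    \int_{\Omega_1} \phi_1 \, \d({P_1}_\#\bar\pi) = \int_\Omega \phi_1\circ P_1 \, \d\bar\pi = \lim_{n\to\infty} \int_\Omega \phi_1 \circ P_1 \, \d\pi_n = \lim_{n\to\infty}\int_{\Omega_1}\phi_1 \, \d\mu_1 = \int_{\Omega_1}\phi_1\,\d\mu_1,
\]
and since $\phi_1$ was arbitrary in $C(\Omega_1)$ and both sides are regular Borel measures on the compact set $\Omega_1$, the Riesz representation theorem yields ${P_1}_\#\bar\pi = \mu_1$; analogously ${P_2}_\#\bar\pi = \mu_2$. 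Hence $\bar\pi \in \Pi(\mu_1,\mu_2)$ with $\bar\pi \geq 0$.

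Finally I would establish weak-$\ast$ lower semicontinuity of $\KK$ along the sequence. Since $c$ is lower semicontinuous and bounded from below on the compact set $\Omega$, it is the pointwise increasing limit of a sequence of continuous functions $c_k \in C(\Omega)$ (e.g.\ $c_k(x) \coloneqq \inf_{y\in\Omega}\{c(y) + k\,|x-y|\}$, the Moreau–Yosida type lower envelopes, which are Lipschitz, increase to $c$, and inherit the lower bound). For each fixed $k$, $x\mapsto c_k(x)$ is continuous, so $\int_\Omega c_k \, \d\pi_n \to \int_\Omega c_k \, \d\bar\pi$ by weak-$\ast$ convergence; combined with $c_k \leq c$ and non-negativity of all the measures this gives $\int_\Omega c_k \, \d\bar\pi = \lim_n \int_\Omega c_k \, \d\pi_n \leq \liminf_n \int_\Omega c \, \d\pi_n$. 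Letting $k\to\infty$ and invoking the monotone convergence theorem on the left (applicable after shifting by the constant lower bound $M$ so the integrands are non-negative) yields $\KK(\bar\pi) = \int_\Omega c \, \d\bar\pi \leq \liminf_n \KK(\pi_n) = \inf\eqref{eq:KP}$. Since $\bar\pi$ is feasible, it is optimal. The only genuinely delicate point is this last lower semicontinuity argument — one must be careful that the approximation $c_k \uparrow c$ together with the uniform mass bound on the $\pi_n$ legitimately lets one pass the $\liminf$ through; everything else is a routine application of Banach–Alaoglu, the continuity of the projections, and Riesz representation.
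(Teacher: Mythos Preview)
The paper does not supply its own proof of this lemma; it simply cites \cite[Theorem~4.1]{Vil09}. Your argument is correct and is essentially the standard direct-method proof one finds in the literature, specialized to the compact setting of the paper (which lets you use Banach--Alaoglu directly instead of Prokhorov's theorem as in Villani's more general treatment). One trivial edge case you may want to mention: when $\|\mu_1\|_{\Measure(\Omega_1)} = 0$ your product-measure construction divides by zero, but then the only feasible plan is $\pi = 0$ and existence is immediate.
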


Despite this existence result and its simple structure, the Kantorovich problem provides some challenging aspects, 
especially from a numerical perspective. First of all, its solution may be non-unique (although there are conditions 
which guarantee uniqueness, see e.g.\ \cite[Theorem~1.17]{San15}). More importantly, since $\pi$ ``lives'' on the Cartesian product 
of $\Omega_1$ and $\Omega_2$, the dimension of a discretized counterpart of \eqref{eq:KP} easily becomes 
so large that a numerical solution by means of standard LP-solvers is no longer possible.
Therefore, several penalization and relaxation methods have been proposed, that in combination with dualization, allow a significant reduction in the size of the problem.
The most popular method is probably the entropic regularization in combination with the well-known Sinkhorn algorithm, see e.g.\ \cite{Cut13, CLM21}. 

In this paper, we rely on a different strategy, namely the quadratic regularization that has been introduced in \cite{LMM21} and is as follows: Given a regularization parameter $\gamma > 0$, two marginals $\mu_1 \in L^2(\Omega_1)$, $\mu_2 \in L^2(\Omega_2)$, and a cost function $c\in L^2(\Omega)$, we consider
\begin{equation}\tag{$\text{KP}_\gamma$}\label{eq:KPgam}
    \left\{\quad
    \begin{aligned}
        \inf \quad & \KK_\gamma(\pi_\gamma) \coloneqq \int_\Omega c(x)\, \pi_\gamma(x)\, \d \lambda(x) 
        + \tfrac{\gamma}{2} \|\pi_\gamma\|_{L^2(\Omega)}^2 \\
        \text{s.t.} \quad & \pi_\gamma \in L^2(\Omega), \quad \pi_\gamma \geq 0 
        \quad\lambda\text{-a.e. in}~\Omega, \\
        & \int_{\Omega_2}\pi_\gamma(x_1, x_2) \,\d \lambda_2(x_2) 
        =\mu_1(x_1)\quad\lambda_1\text{-a.e in}~\Omega_1, \\
		& \int_{\Omega_1}\pi_\gamma(x_1, x_2) \,\d \lambda_1(x_1) 
		= \mu_2(x_2)\quad\lambda_2\text{-a.e in}~\Omega_2.
    \end{aligned}
    \right.
\end{equation}

\begin{lemma}[{\cite[Lemma~2.1, Theorem~2.11]{LMM21}}]\label{lem:quadreg} \ 

    \emph{(i)} Problem \eqref{eq:KPgam} admits a unique solution if and only if 
    $\mu_i \geq 0$ $\lambda_i$-a.e.\ in $\Omega_i$, $i=1,2$, and 
    $\|\mu_1\|_{L^1(\Omega_1)} = \|\mu_2\|_{L^1(\Omega_2)}$.

    \emph{(ii)}  If, in addition, there exist constants $\cbound > -\infty$ and $\delta > 0$ such that 
    $c \geq \cbound$ $\lambda$-a.e.\ in $\Omega$ and $\mu_i \geq \delta$ $\lambda_i$-a.e.\ in $\Omega_i$, 
    $i=1,2$, then $\pi_\gamma\in L^2(\Omega)$ is a solution of \eqref{eq:KPgam} if and only if 
    there exist functions $\alpha_1\in L^2(\Omega_1)$ and $\alpha_2\in L^2(\Omega_2)$ satisfying			
    \begin{subequations}\label{eq:KantProbL2OptSystem}
	\begin{alignat}{3}
	    \pi_\gamma - \frac{1}{\gamma} (\alpha_1 \oplus \alpha_2 - c)_+ & = 0 & \quad & \lambda\text{-a.e. in}~\Omega,
	    \label{eq:KantProbL2OptSystem-a} \\
        \int_{\Omega_2} \pi_\gamma(x_1, x_2) \,\d\lambda_2(x_2) &= \mu_1(x_1) & & \lambda_1\text{-a.e. in}~\Omega_1, 
        \label{eq:KantProbL2OptSystem-b}\\
        \int_{\Omega_1}\pi_\gamma(x_1, x_2) \,\d\lambda_1(x_1) &= \mu_2(x_2) & & \lambda_2\text{-a.e. in}~\Omega_1.
        \label{eq:KantProbL2OptSystem-c}
    \end{alignat}
	\end{subequations}
    Herein, $(\alpha_1 \oplus \alpha_2)(x_1,x_2)\coloneqq\alpha_1(x_1)+\alpha_2(x_2)$ $\lambda$-a.e.\ in $\Omega$
	refers to the direct sum of $\alpha_1 \in L^2(\Omega_1)$ and $\alpha_2\in L^2(\Omega_2)$, while, for given 
	$u\in L^2(\Omega)$, $(u)_+(x) \coloneqq \max\{u(x);0\}$ $\lambda$-a.e.\ in $\Omega$ denotes the pointwise 
	maximum. It is clear that both the direct sum and the pointwise maximum map $L^2(\Omega_1)\times L^2(\Omega_2)$ 
	and $L^2(\Omega)$, respectively, to $L^2(\Omega)$ so that \eqref{eq:KantProbL2OptSystem-a} is well defined.
	
	\emph{(iii)} Under the above assumptions, 
	the functions $\alpha_i\in L^2(\Omega_i)$, $i=1,2$, from (ii) solve the dual problem given by
	\begin{equation}
	    \left\{\quad 
	    \begin{aligned}
	        \max \quad & \Phi_\gamma(a_1, a_2) \coloneqq 
	        - \tfrac{1}{2} \|(a_1 \oplus a_2 - c)_+ \|_{L^2(\Omega)}^2 
	        + \gamma \sum_{i=1}^2\int_{\Omega_i} a_i \mu_i \, \d \lambda_i \\
	        \text{s.t.} \quad & a_i \in L^2(\Omega_i), \; i =1,2,
	    \end{aligned}
	    \right.
	\end{equation}
	and there is no duality gap, i.e., $\Phi_\gamma(\alpha_1, \alpha_2) = \KK_\gamma(\pi_\gamma)$.
\end{lemma}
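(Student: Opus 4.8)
The plan is to establish the three parts in turn, obtaining the necessity direction of (ii) together with (iii) from a convex-duality argument, which is where I expect the main obstacle.

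\emph{Part (i).} For the ``if'' direction I would invoke the direct method. The objective $\KK_\gamma$ is strictly convex and, by Young's inequality $\langle c,\pi\rangle\ge-\tfrac1{2\gamma}\|c\|_{L^2(\Omega)}^2-\tfrac\gamma2\|\pi\|_{L^2(\Omega)}^2$, coercive on $L^2(\Omega)$; it is weakly lower semicontinuous as the sum of the weakly continuous linear functional $\pi\mapsto\langle c,\pi\rangle$ and a continuous convex quadratic term. The feasible set is convex and strongly — hence weakly — closed, and under the stated conditions it is non-empty: if $m\coloneqq\|\mu_1\|_{L^1(\Omega_1)}=\|\mu_2\|_{L^1(\Omega_2)}>0$, then $\tfrac1m\,\mu_1\otimes\mu_2\in L^2(\Omega)$ is admissible, and if $m=0$ one takes $\pi_\gamma=0$. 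A minimizing sequence is therefore bounded, and a weak limit point is a minimizer; uniqueness follows from strict convexity. Conversely, if a feasible $\pi_\gamma$ exists, then $\pi_\gamma\ge0$ forces (by Fubini) $\mu_i\ge0$ $\lambda_i$-a.e.\ and $\|\mu_1\|_{L^1(\Omega_1)}=\int_\Omega\pi_\gamma\,\d\lambda=\|\mu_2\|_{L^1(\Omega_2)}$, so these conditions are also necessary.

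\emph{Sufficiency in (ii).} This direction is elementary and I would argue it directly. Let $(\pi_\gamma,\alpha_1,\alpha_2)$ solve \eqref{eq:KantProbL2OptSystem}. Then \eqref{eq:KantProbL2OptSystem-a} gives $\pi_\gamma=\tfrac1\gamma(\alpha_1\oplus\alpha_2-c)_+\ge0$ and \eqref{eq:KantProbL2OptSystem-b}--\eqref{eq:KantProbL2OptSystem-c} give feasibility. Writing $w\coloneqq\alpha_1\oplus\alpha_2-c$ and $(w)_-\coloneqq(-w)_+$, one has $c+\gamma\pi_\gamma=(\alpha_1\oplus\alpha_2)+(w)_-$ pointwise. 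For any feasible $\pi$ the marginal constraints give $\langle\alpha_1\oplus\alpha_2,\pi\rangle=\sum_{i=1}^2\int_{\Omega_i}\alpha_i\mu_i\,\d\lambda_i=\langle\alpha_1\oplus\alpha_2,\pi_\gamma\rangle$, while $(w)_-\,\pi_\gamma=\tfrac1\gamma(w)_-(w)_+=0$ a.e.; hence
\[
  \langle c+\gamma\pi_\gamma,\ \pi-\pi_\gamma\rangle=\langle(w)_-,\pi\rangle\ge0 .
\]
Since $\KK_\gamma'(\pi_\gamma)=c+\gamma\pi_\gamma$ and $\KK_\gamma$ is convex, this is exactly the first-order optimality condition for \eqref{eq:KPgam}; combined with (i), $\pi_\gamma$ is the unique solution.

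\emph{Necessity in (ii) and part (iii).} These reduce to the existence of Lagrange multipliers, i.e.\ to strong duality with dual attainment; here I expect the main obstacle. The marginal operator $A\colon L^2(\Omega)\to L^2(\Omega_1)\times L^2(\Omega_2)$, $A\pi=\bigl(\int_{\Omega_2}\pi\,\d\lambda_2,\int_{\Omega_1}\pi\,\d\lambda_1\bigr)$, is bounded with adjoint $A^*(a_1,a_2)=a_1\oplus a_2$ and closed range $\{(\nu_1,\nu_2):\int_{\Omega_1}\nu_1\,\d\lambda_1=\int_{\Omega_2}\nu_2\,\d\lambda_2\}$, of codimension one. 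Computing the Fenchel conjugate $F^*(v)=\tfrac1{2\gamma}\|(v-c)_+\|_{L^2(\Omega)}^2$ of $F(\pi)=\langle c,\pi\rangle+\tfrac\gamma2\|\pi\|_{L^2(\Omega)}^2+I_{\{\pi\ge0\}}(\pi)$ and applying Fenchel--Rockafellar duality to $\inf_\pi\{F(\pi)+I_{\{(\mu_1,\mu_2)\}}(A\pi)\}$ yields, up to an inessential positive factor, the dual problem $\max\Phi_\gamma$, with weak duality following from a pointwise completion of the square. The delicate point is dual attainment: the cone $\{\pi\ge0\}$ has empty interior in $L^2(\Omega)$, so the classical Slater condition is unavailable, and $\Phi_\gamma$ is invariant under $(a_1,a_2)\mapsto(a_1+t,a_2-t)$, the annihilator of $\mathrm{range}\,A$. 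I would handle this by working modulo that one-dimensional invariance and exploiting the extra hypotheses — $c\ge\cbound$ keeps $\Phi_\gamma$ proper, and $\mu_i\ge\delta>0$ (which makes $\tfrac1m\mu_1\otimes\mu_2$ bounded away from $0$ and prevents the linear term $\gamma\sum_i\int_{\Omega_i}a_i\mu_i$ from staying bounded while $a_i$ tends to $-\infty$) — to show that a maximizing sequence can be chosen bounded, hence weakly convergent to a maximizer $(\alpha_1,\alpha_2)$ of the concave, weakly upper semicontinuous $\Phi_\gamma$; an equivalent route is to obtain $(\alpha_1,\alpha_2)$ as a weak limit of the multipliers $\tfrac1\varepsilon(\mu-A\pi_\varepsilon)$ of the problems penalized by $\tfrac1{2\varepsilon}\|A\pi-\mu\|^2$, the crux again being a uniform bound on these multipliers via $\mu_i\ge\delta$. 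Once $(\alpha_1,\alpha_2)$ is available, the stationarity condition $\nabla\Phi_\gamma(\alpha_1,\alpha_2)=0$ reads exactly \eqref{eq:KantProbL2OptSystem-b}--\eqref{eq:KantProbL2OptSystem-c} for $\pi_\gamma\coloneqq\tfrac1\gamma(\alpha_1\oplus\alpha_2-c)_+$, so this $\pi_\gamma$ is feasible and, by the sufficiency part, is the unique solution of \eqref{eq:KPgam}; this proves necessity in (ii), and inserting $\pi_\gamma=\tfrac1\gamma(\alpha_1\oplus\alpha_2-c)_+$ into $\KK_\gamma$ and simplifying with the marginal identities gives the no-duality-gap identity of (iii).
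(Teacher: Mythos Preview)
The paper does not give its own proof of this lemma: it is quoted verbatim from \cite[Lemma~2.1, Theorem~2.11]{LMM21} and no argument is reproduced. So there is nothing in the present paper to compare your proposal against line by line.

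Your outline is sound and matches what one would expect the cited proof to contain. Part~(i) and the sufficiency direction of~(ii) are standard and your arguments are correct (the specific Young inequality you wrote only gives a lower bound rather than coercivity, but the intended coercivity is immediate from Cauchy--Schwarz, so this is a harmless slip). You correctly locate the genuine work in dual attainment, and your diagnosis---Slater fails because the nonnegative cone has empty interior in $L^2$, the dual is invariant under constant shifts, and the hypotheses $c\ge\cbound$ and $\mu_i\ge\delta$ are precisely what compensates---is right. The one place your sketch stays vague is \emph{how} $\mu_i\ge\delta$ yields the a~priori bound on a normalized maximizing sequence. A hint about the mechanism used in \cite{LMM21} can be read off from the present paper: the proof of Lemma~\ref{lem:L2bound} says it argues ``similarly to the second part of the proof of \cite[Theorem~2.11]{LMM21}'' and then derives an $L^\infty$ bound on $(\alpha_1)_-$ by showing that if $\alpha_1\le -R$ on a set of positive measure with $R$ large, then $\int_{\Omega_2}(\alpha_1\oplus\alpha_2-c)_+\,\d\lambda_2<\gamma\delta\le\gamma\mu_1$, contradicting the marginal constraint. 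This pointwise argument, rather than a soft compactness argument on the whole maximizing sequence, is the device you would need to make your sketch into a proof.
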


The above results directly address the aforementioned challenges. Besides the uniqueness of the solution, the approach allows us to escape the curse of dimensionality. 
If one inserts \eqref{eq:KantProbL2OptSystem-a} into \eqref{eq:KantProbL2OptSystem-b} and 
\eqref{eq:KantProbL2OptSystem-c}, then a non-smooth system of equations for the 
dual variables $\alpha_1$ and $\alpha_2$ arises. We are then dealing with a problem in 
$L^2(\Omega_1) \times L^2(\Omega_2)$ instead of $L^2(\Omega_1 \times \Omega_2)$, which, 
after discretization, leads to a substantial reduction of the number of unknowns. 
Moreover, due to the $\max$-operator in \eqref{eq:KantProbL2OptSystem-a}, the sparsity pattern of the transport plans is better 
preserved compared to the entropic regularization. 
Finally, the structure of \eqref{eq:KantProbL2OptSystem} allows for the application of 
a semi-smooth Newton method, see \cite{LMM21} for more details.

The convergence of (sub-)sequences of solutions of \eqref{eq:KPgam} to 
solutions of \eqref{eq:KP} for $\gamma \searrow 0$ is addressed in \cite{ML21}.
To be more precise, it is shown that the objective of \eqref{eq:KPgam} $\Gamma$-converges 
to the objective of \eqref{eq:KP} w.r.t.\ weak-$\ast$ convergence in $\Measure(\Omega)$
as $\gamma \searrow 0$, provided that the original marginals in $\Measure(\Omega_i)$, $i=1,2$, 
are smoothed and the smoothing parameter is properly coupled with $\gamma$, 
see \cite[Theorem~4.2]{ML21}.

As outlined in the introduction, the goal of this and the companion papers is to employ the 
quadratic regularization for a bilevel optimization problem with the Kantorovich problem \eqref{eq:KP} as the constraint. 
The motivation for this approach is as follows: First, the uniqueness of solutions for \eqref{eq:KPgam} 
allows us to define a solution operator $\SS_\gamma (c, \mu_1, \mu_2) \mapsto \pi_\gamma$, 
which, in turn, enables us to employ the so-called implicit programming approach.
Secondly, we expect that $\SS_\gamma$ (or a discretization thereof) provides sufficient 
smoothness to use non-smooth optimization algorithms for the solution of the regularized bilevel problem.
Before we address the regularized bilevel problem, we turn to the optimization of the original problem \eqref{eq:KP} 
and show existence of at least one optimal solution to the latter in the upcoming section.


\section{Existence of Optimal Solutions of the Bilevel Kantorovich Problem}
\label{sec:existBK}

Let us first recall the bilevel optimization of the Kantorovich problem from the introduction:
\begin{equation}\tag{BK}
    \left\{\quad
    \begin{aligned}
        \inf_{\pi,\mu_1} \quad & \JJ(\pi, \mu_1) \\
        \text{s.t.} \quad & \mu_1\in\Measure(\Omega_1), \quad \mu_1\geq 0,
        \quad \|\mu_1\|_{\Measure(\Omega_1)} = 1, \\
        & \pi \in \argmin\left\{\int_\Omega \cost\,\d\varphi
        \colon\varphi\in\Pi(\mu_1,\marg), \,\varphi\geq 0\right\},
    \end{aligned}
    \right.        
\end{equation}
where $\cost \in W^{1,p}(\Omega)$, for $p>d_1 + d_2$, and $\marg \in \Measure(\Omega_2)$ 
denote a fixed cost functional and a fixed marginal, respectively, and $\JJ$ is a given objective, 
see Section~\ref{sec:assu}. To shorten notation, 
given $c\in C(\Omega)$ and $\mu_i \in \Measure(\Omega_i)$, $i=1,2$, we abbreviate the set 
of associated optimal transport plans by
\begin{equation}\label{eq:solsetKP}
    \SS(c, \mu_1, \mu_2) \coloneqq
    \argmin\left\{\int_\Omega c\,\d \varphi \colon\varphi\in\Pi(\mu_1,\mu_2), \,\varphi\geq 0\right\}.
\end{equation}
The essential tool to establish the existence of solutions to \eqref{eq:BK} is the following stability result 
for the Kantorovich problem. Its proof is based on the concept of $c$-cyclic monotonicity. For details, we refer to \cite[Section~5]{Vil09}.

\begin{lemma}[{Stability of the transport plan, \cite[Theorem~5.20]{Vil09}}]\label{lem:kantstabil}
    Let $c \in C(\Omega)$ be given and assume that $\{\mu_1^k\}_{k\in \N} \subset \Measure(\Omega_1)$ 
    and $\{\mu_2^k\}_{k\in \N} \subset \Measure(\Omega_2)$ are sequences that 
    satisfy $\mu_1^k, \mu_2^k\geq 0$ and 
    $\|\mu_1^k\|_{\Measure(\Omega_1)} = \|\mu_2^k\|_{\Measure(\Omega_2)}$ for all $k\in \N$
    and converge weakly-$\ast$ to $\mu_1\in \Measure(\Omega_1)$ and $\mu_2\in \Measure(\Omega_2)$. 
    Let moreover $\{\pi_k\}_{k\in \N}$ be a sequence of optimal transport plans associated with 
    $(\mu_1^k, \mu_2^k)$, i.e., $\pi_k \in \SS(c, \mu_1^k, \mu_2^k)$.
    Then there is a subsequence that converges weakly-$\ast$ to an optimal  transport plan
    $\pi \in \SS(c, \mu_1, \mu_2)$.
\end{lemma}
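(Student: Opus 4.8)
The plan is to invoke the classical stability theorem for optimal transport as stated in \cite[Theorem~5.20]{Vil09}, whose hypotheses we must verify in our compact setting. The underlying domains $\Omega_1, \Omega_2$ are compact, hence the sequences $\{\mu_1^k\}$ and $\{\mu_2^k\}$, being nonnegative with uniformly bounded total mass (the common mass $\|\mu_1^k\|_{\Measure(\Omega_1)} = \|\mu_2^k\|_{\Measure(\Omega_2)}$ is bounded because it converges), are tight and uniformly bounded in total variation. The same is true of the plans $\{\pi_k\}$, since each $\pi_k \in \Pi(\mu_1^k, \mu_2^k)$ has total mass equal to that common value and is supported in the compact set $\Omega = \Omega_1 \times \Omega_2$. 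By the sequential Banach--Alaoglu theorem in $\Measure(\Omega)$ (which is the dual of $C(\Omega)$, separable on the compact metric space $\Omega$), a subsequence of $\{\pi_k\}$ converges weakly-$*$ to some $\pi \in \Measure(\Omega)$, and $\pi \geq 0$ since the nonnegative cone is weak-$*$ closed.

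First I would check that $\pi \in \Pi(\mu_1, \mu_2)$: for any $\phi \in C(\Omega_1)$, the map $\varphi \mapsto \int_{\Omega_1} \phi \, \d({P_1}_\#\varphi) = \int_\Omega \phi \circ P_1 \, \d\varphi$ is weak-$*$ continuous because $\phi \circ P_1 \in C(\Omega)$, so passing to the limit in $\int_{\Omega_1} \phi \, \d\mu_1^k = \int_\Omega \phi\circ P_1 \,\d\pi_k$ along the subsequence yields ${P_1}_\#\pi = \mu_1$, and analogously ${P_2}_\#\pi = \mu_2$. Next, $c \in C(\Omega)$ is itself a bounded continuous test function on the compact set $\Omega$, so $\KK(\pi_k) = \int_\Omega c \,\d\pi_k \to \int_\Omega c\,\d\pi = \KK(\pi)$ along the subsequence; in particular the value along the optimal sequence converges.

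The main step — and the one where the content of \cite[Theorem~5.20]{Vil09} enters — is to show that the limit $\pi$ is \emph{optimal} for $(\mu_1, \mu_2)$, not merely feasible. The standard argument proceeds via $c$-cyclical monotonicity: each optimal plan $\pi_k$ is concentrated on a $c$-cyclically monotone set, and one shows that this property is stable under weak-$*$ convergence when $c$ is continuous, so that the support of $\pi$ is again $c$-cyclically monotone; since $\Omega$ is compact and $c$ is continuous (hence bounded), $c$-cyclical monotonicity of the support is sufficient for optimality (cf.\ \cite[Theorem~5.10]{Vil09}). Alternatively, and perhaps more directly in our setting, one can argue by contradiction: if $\pi$ were not optimal, there would exist $\tilde\pi \in \Pi(\mu_1,\mu_2)$ with $\int_\Omega c\,\d\tilde\pi < \int_\Omega c\,\d\pi = \lim_k \KK(\pi_k)$, and then by a gluing/disintegration construction one builds competitors $\tilde\pi_k \in \Pi(\mu_1^k, \mu_2^k)$ with $\int_\Omega c\,\d\tilde\pi_k \to \int_\Omega c\,\d\tilde\pi$, contradicting the optimality of $\pi_k$ for large $k$. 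The delicate point, which is exactly what Villani's theorem packages, is constructing these recovery competitors $\tilde\pi_k$ with the prescribed marginals $\mu_1^k, \mu_2^k$ while controlling the cost — this is where the gluing lemma and the continuity (equivalently, uniform approximability) of $c$ on the compact set $\Omega$ are used. Since all hypotheses of \cite[Theorem~5.20]{Vil09} are met here ($\Omega_1, \Omega_2$ Polish and in fact compact, $c$ continuous and bounded, marginals converging weakly), the conclusion $\pi \in \SS(c, \mu_1, \mu_2)$ follows, completing the proof.
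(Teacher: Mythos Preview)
Your proposal is correct and matches the paper's approach: the paper does not prove this lemma but simply cites \cite[Theorem~5.20]{Vil09}, noting that the proof is based on $c$-cyclic monotonicity, which is exactly the main mechanism you identify. Your write-up is in fact more detailed than the paper's, as you explicitly verify the hypotheses (compactness, tightness, weak-$\ast$ closedness of the marginal constraints) and sketch both the $c$-cyclic monotonicity route and a competitor-construction alternative.
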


The above lemma is just a special case of \cite[Theorem~5.20]{Vil09}, 
where the cost function need not to be fixed. We underline that the notion of ``weak convergence'' in \cite{Vil09} (sometimes also called narrow convergence) coincides with weak-$\ast$ convergence
in our case, since $\Omega_1$, $\Omega_2$, and $\Omega$ are compact.

\begin{theorem}\label{thm:bikantexist}
    There exists at least one globally optimal solution to \eqref{eq:BK}.
\end{theorem}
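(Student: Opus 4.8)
The plan is to use the direct method of the calculus of variations. First I would show that the feasible set of \eqref{eq:BK} is non-empty: for any fixed $\mu_1 \in \Measure(\Omega_1)$ with $\mu_1 \geq 0$ and $\|\mu_1\|_{\Measure(\Omega_1)} = 1 = \|\marg\|_{\Measure(\Omega_2)}$, the set $\Pi(\mu_1, \marg)$ is non-empty (it contains the product measure $\mu_1 \otimes \marg$), and since $\cost \in C(\Omega)$ is continuous on the compact set $\Omega$ hence bounded, Lemma~\ref{lem:kantexist} guarantees that $\SS(\cost, \mu_1, \marg) \neq \emptyset$. Hence \eqref{eq:BK} has at least one feasible pair $(\pi, \mu_1)$, and so the infimum $m \coloneqq \inf \JJ$ over the feasible set is well-defined in $[-\infty, \infty)$; one should note that $\JJ$ maps into $\R$ so $m < \infty$.

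Next I would take a minimizing sequence $\{(\pi_k, \mu_1^k)\}_{k \in \N}$ of feasible points with $\JJ(\pi_k, \mu_1^k) \to m$. The key compactness observation is that all these measures are uniformly bounded in total variation: $\|\mu_1^k\|_{\Measure(\Omega_1)} = 1$ for all $k$, and since $\pi_k \in \Pi(\mu_1^k, \marg)$ is non-negative, $\|\pi_k\|_{\Measure(\Omega)} = \pi_k(\Omega) = \mu_1^k(\Omega_1) = 1$ as well. By the Banach–Alaoglu theorem (using that $\Measure(\Omega_1) = C(\Omega_1)^*$ and $\Measure(\Omega) = C(\Omega)^*$, valid since $\Omega_1$ and $\Omega$ are compact metric spaces), we can extract a subsequence — not relabeled — such that $\mu_1^k \overset{\ast}{\rightharpoonup} \mu_1$ in $\Measure(\Omega_1)$ and $\pi_k \overset{\ast}{\rightharpoonup} \pi$ in $\Measure(\Omega)$ for some limits $\mu_1$ and $\pi$. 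Weak-$\ast$ limits of non-negative measures are non-negative, and the total-mass constraint $\|\mu_1\|_{\Measure(\Omega_1)} = \mu_1(\Omega_1) = \lim_k \mu_1^k(\Omega_1) = 1$ passes to the limit (testing against the constant function $1 \in C(\Omega_1)$), so $\mu_1$ satisfies the upper-level constraints.

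The crucial step is to verify that $(\pi, \mu_1)$ is still feasible, i.e., that $\pi \in \SS(\cost, \mu_1, \marg)$. This is exactly where Lemma~\ref{lem:kantstabil} enters: with $c = \cost \in C(\Omega)$ fixed, $\mu_2^k \coloneqq \marg$ constant (trivially weak-$\ast$ convergent to $\marg$), the sequences $\mu_1^k, \mu_2^k \geq 0$ with equal total variation, and $\pi_k \in \SS(\cost, \mu_1^k, \marg)$, the lemma yields a further subsequence converging weak-$\ast$ to some $\widetilde{\pi} \in \SS(\cost, \mu_1, \marg)$. But weak-$\ast$ limits are unique, so $\widetilde{\pi} = \pi$, whence $\pi \in \SS(\cost, \mu_1, \marg)$ and the pair $(\pi, \mu_1)$ is feasible for \eqref{eq:BK}. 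Finally, the weak-$\ast$ lower semicontinuity of $\JJ$ gives $\JJ(\pi, \mu_1) \leq \liminf_{k\to\infty} \JJ(\pi_k, \mu_1^k) = m$, and since $(\pi,\mu_1)$ is feasible we also have $\JJ(\pi,\mu_1) \geq m$; thus $\JJ(\pi, \mu_1) = m$ and $(\pi, \mu_1)$ is a global minimizer. (In particular $m > -\infty$.)

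I do not anticipate a genuine obstacle here — the proof is a textbook application of the direct method, and all the analytical heavy lifting (the non-trivial $c$-cyclical monotonicity argument behind stability of optimal plans) has been outsourced to Lemma~\ref{lem:kantstabil}. The only points requiring a modicum of care are the double subsequence extraction (first for weak-$\ast$ compactness of $\{\pi_k\}$, then the further one from the stability lemma, which is harmless) and the identification of the weak-$\ast$ limit of $\pi_k$ with the limit produced by Lemma~\ref{lem:kantstabil} via uniqueness of limits; these are routine.
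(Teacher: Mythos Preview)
Your proposal is correct and follows essentially the same route as the paper's proof: both apply the direct method, use Lemma~\ref{lem:kantexist} for feasibility, extract a weak-$\ast$ convergent minimizing subsequence from the total-variation bound, and invoke Lemma~\ref{lem:kantstabil} together with weak-$\ast$ lower semicontinuity of $\JJ$ to conclude. Your treatment is in fact slightly more careful about the subsequence produced by Lemma~\ref{lem:kantstabil} and its identification with the already-extracted weak-$\ast$ limit via uniqueness.
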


\begin{proof}
    Based on Lemma~\ref{lem:kantstabil}, the result easily follows from the direct method of the calculus of variations. 
    
    First, thanks to Lemma~\ref{lem:kantexist}, the feasible set of \eqref{eq:BK}, denoted by $\FF$, is nonempty. 
    Thus, there exists a minimizing sequence $\{(\pi_k, \mu_1^k)\}_{k\in \N}$ so that 
    \begin{equation*}
        \lim_{k\to\infty} \JJ(\pi_k, \mu_1^k) = \inf_{(\pi, \mu_1) \in \FF}  \JJ(\pi, \mu_1)
        \in \R \cup \{-\infty\}.
    \end{equation*}
    The feasibility of the minimizing sequence implies
    \begin{equation*}
        \|\pi_k\|_{\Measure(\Omega)} 
        = \pi_k(\Omega_1 \times \Omega_2) 
        = ({P_1}_\# \pi_k)(\Omega_1) = \mu_1^k(\Omega_1) = \|\mu_1^k\|_{\Measure(\Omega_1)} = 1
        \quad \forall\, k \in \N
    \end{equation*}
    so that there is a subsequence, denoted by the same symbol to ease notation, that converges weakly-$\ast$ 
    to a limit $(\bar\pi, \bar\mu_1) \in \Measure(\Omega) \times \Measure(\Omega_1)$.
    Thus $(\mu_1^k, \marg) \weak^* (\bar \mu_1, \marg)$ in $\Measure(\Omega_1) \times \Measure(\Omega_2)$ 
    and consequently, Lemma~\ref{lem:kantstabil} implies $\bar\pi \in \SS(\cost, \bar\mu_1, \marg)$.
    In view of the constraints of the Kantorovich problem, 
    this also gives $\bar\mu_1 \geq 0$ and $\|\bar\mu_1\|_{\Measure(\Omega_1)} = 1$ and hence,
    $(\bar \pi, \bar\mu_1) \in \FF$, i.e., the weak limit is feasible.
    The optimality of the weak limit follows from the presupposed weak-$\ast$ lower semicontinuity of $\JJ$.
\end{proof}

\begin{remark}
    Lemma~\ref{lem:kantstabil} only requires the continuity of the cost function. For the mere existence result, 
    one could therefore relax the regularity assumption on the cost function to $\cost \in C(\Omega)$.
    The improved regularity of $\cost$ is however required for the analysis of the regularized bilevel problem, 
    and for this reason, we impose it as standing assumption.
    Moreover, Lemma~\ref{lem:kantstabil} also holds in Polish spaces and not only in compact sets. 
    One can therefore generalize the existence result of Theorem~\ref{thm:bikantexist}
    to a much broader class of domains $\Omega_1$ and $\Omega_2$.
\end{remark}

If one replaces \eqref{eq:KP} by its necessary and sufficient optimality conditions, then 
\eqref{eq:BK} can equivalently be rewritten as 
\begin{equation*}
    \eqref{eq:BK} \quad \Longleftrightarrow\quad \left\{\;\;
    \begin{aligned}
        \inf \quad & \JJ(\pi, \mu_1) \\
        \text{s.t.} \quad & \mu_1\in\Measure(\Omega_1), \quad \mu_1\geq 0,
        \quad \|\mu_1\|_{\Measure(\Omega_1)} = 1, \\
        & \pi \in \Measure(\Omega_1 \times \Omega_2), \quad \varphi \in C(\Omega_1), 
        \quad \psi \in C(\Omega_2),\\
        & {P_1}_\# \pi = \mu_1, \quad {P_2}_\#\pi = \marg, \\
        & \pi \geq 0,\quad 
        \varphi(x_1) + \psi(x_2) \leq \cost(x_1, x_2) \;\, \forall \, (x_1, x_2) \in \Omega_1\times \Omega_2,\\
        & \int_{\Omega_1\times \Omega_2} (\varphi \oplus \psi -\cost) \,\d\pi = 0.
    \end{aligned}
    \right.        
\end{equation*}    
We observe that the last two lines in the above reformulation of \eqref{eq:BK} form a 
complementarity system in $\Measure(\Omega_1\times \Omega_2)$ and $C(\Omega_1\times \Omega_2)$, so that \eqref{eq:BK} becomes an MPCC in the space of regular Borel measures,
as already mentioned in the introduction.
Even though several results for MPCCs are known, in particular when the cone defining the complementarity constraints is polyhedric, which is the case here, see \cite[Example~4.12]{Wac19}, problems of this type are typically smoothed or regularized, and we will do just that in the next section.


\section{Existence of Optimal Solutions of the Regularized Bilevel Problem}
\label{sec:existBKgam}

The regularized Kantorovich problem \eqref{eq:KPgam} clearly admits a solution only if the marginals are functions in $L^2(\Omega_1)$ and $L^2(\Omega_2)$.
Therefore, one needs to regularize the marginals, if the Kantorovich problem 
in \eqref{eq:BK} is replaced by \eqref{eq:KPgam}. But even if the marginals were functions in 
$L^2(\Omega_1)$ and $L^2(\Omega_2)$, one needs to smooth them considering the lack of (weak) continuity of the solution mapping associated with \eqref{eq:KPgam}, see Example~\ref{ex:counterhoelder} below.
What is more, in order to guarantee the existence of 
the dual variables $\alpha_1$ and $\alpha_2$ from Lemma~\ref{lem:quadreg}, 
the marginals need to be strictly positive, see \cite[Assumption~1]{LMM21}.
We therefore introduce the convolution \& constant shifting operators
\begin{equation}\label{eq:defTT}
    \TT_i^\delta \colon \Measure(\Omega_i) \ni \mu_i \mapsto 
    \varphi_i^\delta \ast \mu_i + \frac{\delta}{|\Omega_i^\delta|} \in L^2(\Omega_i^\delta), 
    \quad i = 1,2,
\end{equation}
which turn the marginals into smooth and strictly positive functions on $\Omega_1^\delta$ and $\Omega_2^\delta$.
Herein, $\delta > 0$ is a smoothing parameter, 
$\varphi_i^\delta \in C_c^\infty(\R^{d_i})$ denotes a standard mollifier with 
$\|\varphi_i^\delta\|_{L^1(\R^{d_i})} = 1$ and support in 
$\overline{B(0, \delta)} \subset \R^{d_i}$ and $\Omega_i^\delta \coloneqq \Omega_i + \overline{B(0, \delta)}$, 
$i=1,2$. Moreover, we set $\Omega_\delta \coloneqq \Omega_1^\delta \times \Omega_2^\delta$.
When mollifying $\mu_i$, we of course extend it by zero, i.e.,
\begin{equation*}
    (\varphi_i^\delta \ast \mu_i)(x) = \int_{\Omega_i} \varphi_i^\delta(x - y) \,\d\mu_i(y), 
    \quad x\in \Omega_i^\delta.
\end{equation*}
As a consequence, $\varphi_i^\delta \ast \mu_i \weak^\ast \mu_i$ in $\Measure(\Omega_i)$ 
as $\delta \searrow 0$, provided that the support of $\mu_i$ has positive distance to $\partial\Omega_i$,
which will be useful for the convergence analysis in the companion paper \cite{HMM22}, where the smoothing parameter
$\delta$ will be polynomially coupled with the regularization parameter $\gamma$.

Owing to Lemma~\ref{lem:quadreg} there exists a unique solution $\pi_\gamma \in L^2(\Omega_\delta)$ to 
\eqref{eq:KPgam} for costs in $L^2(\Omega_\delta)$ and marginals in $L^2(\Omega_i^\delta)$, $i=1,2$, that satisfy the conditions in Lemma~\ref{lem:quadreg}(i). We denote the associated solution operator by
\begin{equation*}
\begin{aligned}
    & \SS_\gamma \colon  L^2(\Omega_\delta) \times\MM_0 \ni (c, \mu_1, \mu_2)
    \mapsto \pi_\gamma \in L^2(\Omega_\delta), \\
    & \MM_0(\Omega_\delta) 
    \coloneqq \big\{ (\mu_1, \mu_2) \in L^2(\Omega_1^\delta) \times L^2(\Omega_2^\delta) \colon 
    \begin{aligned}[t]
        & \|\mu_1\|_{L^1(\Omega_1^\delta)} = \|\mu_2\|_{L^1(\Omega_2^\delta)}, \\[-0.5ex]
        & \; \mu_i \geq 0\text{ $\lambda_i$-a.e.\ in } \Omega_i^\delta, i=1,2 \big\}.
    \end{aligned}        
\end{aligned}
\end{equation*}
To ease notation, we suppress the dependency of $\pi_\gamma$ and $S_\gamma$ on $\delta$.
Furthermore, we introduce the extension-by-zero operator $\EE_\delta \colon C(\Omega) \to L^2(\Omega_\delta)$, 
whose adjoint $\EE_\delta^* \colon L^2(\Omega_\delta) \to \Measure(\Omega)$ is the associated restriction operator.
Now, we have everything at hand to formulate the regularized bilevel problem:
\begin{equation}\tag{BK$_\gamma^\delta$}\label{eq:BKgam}
    \left\{\quad
    \begin{aligned}
        \inf_{\pi_\gamma,\mu_1, c} \quad & \JJ_\gamma(\pi_\gamma, \mu_1, c) 
        \coloneqq \JJ(\pi_\gamma, \mu_1) + \tfrac{1}{p \gamma}\, \|c - \cost\|_{W^{1,p}(\Omega)}^p \\
        \text{s.t.} \quad & c\in W^{1,p}(\Omega), \quad
        \mu_1\in\Measure(\Omega_1), \quad \mu_1\geq 0,
        \quad \|\mu_1\|_{\Measure(\Omega_1)} = 1, \\
        & \pi_\gamma 
        = \EE_\delta^* \,\SS_\gamma\big( \EE_\delta\,c, \TT_1^\delta(\mu_1), \TT_2^\delta(\marg) \big).
    \end{aligned}
    \right.        
\end{equation}
As we will see in the proof of Theorem~\ref{thm:BKgamexist}, there holds 
$(\TT_1^\delta(\mu_1), \TT_2^\delta(\marg)) \in \MM_0(\Omega_\delta)$ 
such that $\pi_\gamma$ is well defined, cf. Lemma \ref{lem:quadreg}(i).
Compared to \eqref{eq:BK}, we not only replace the Kantorovich problem as the lower-level problem with its regularized counterpart, but also add the cost function $c$ to the set of optimization variables.
This is motivated by the so-called reverse approximation property, which is essential to show the convergence 
of minimizers of \eqref{eq:BKgam} towards solutions of the original unregularized bilevel problem, see the companion paper \cite{HM22}, where this property is elaborated for the finite dimensional counterparts of \eqref{eq:BK} and \eqref{eq:BKgam}.
This property requires a set of optimization variables that is sufficiently rich, as is also required, e.g., in the optimization of perfect plasticity, see \cite{MW21}. For this reason, $c$ is treated as an additional 
optimization variable.
After all, the penalty term in the upper-level objective $\JJ_\gamma$ will ensure that, in the limit, $c$ equals the given 
cost function $\cost$, see \cite{HM22}.

\begin{remark}
    Instead of regularizing w.r.t. the Lebesgue measure, one could also apply a regularization 
    w.r.t.\ the product measure of the marginals $\mu_1\otimes \mu_2$, i.e., 
    \begin{equation}\tag{$\widetilde{\text{KP}}_\gamma$}
        \left\{\quad
        \begin{aligned}
            \inf \quad & \int_\Omega c\, \d \pi + \tfrac{\gamma}{2} \int_\Omega \pi^2 \,\d (\mu_1 \otimes \mu_2)\\
            \text{s.t.} \quad & \pi \in L^2(\Omega_1\times \Omega_2, \mu_1\otimes \mu_2) \cap \Pi(\mu_1, \mu_2),
        \end{aligned}
        \right.
    \end{equation}
    where, with a slight abuse of notation, we use the same symbol for the Borel measure $\pi$ and its 
    density w.r.t.\ the product measure. Note that the constraint $\pi \in \Pi(\mu_1, \mu_2)$ does not imply that 
    $\pi$ is automatically absolutely continuous w.r.t.\ the product measure, as the counterexample 
    $\Omega_i = [0,1]$, $\mu_i=\lambda$, $i=1,2$, and $\pi = (\id, \id)_\# \lambda$ shows. 
    Hence, an additional regularization is also necessary in this case. Nevertheless, 
    a regularization w.r.t.\ to the product measure has several advantages. For instance, the marginals need not to be smoothed and the positivity assumption on $\mu_1$ and $\mu_2$ in \cite[Assumption~1]{LMM21} becomes superfluous. However, in the bilevel context, this approach does not seem to be promising: $\mu_1$ is an upper-level variable and therefore the space for $\pi$ is no longer fixed but depends on the optimization variable.
\end{remark}

The rest of this paper is dedicated to the existence of solutions to \eqref{eq:BKgam}. In this context, the continuity properties of $\SS_\gamma$ are of course essential and will be discussed in the following.

\subsection{H\"older Continuity of the Regularized Solution Operator}\label{sec:Hoelder}

The key tool in the existence proof for the unregularized bilevel problem in Theorem~\ref{thm:bikantexist} 
has been the stability of the Kantorovich problem w.r.t.\ (weak-$\ast$) perturbations of the marginals from 
Lemma~\ref{lem:kantstabil}.
Unfortunately, such a weak continuity result does not hold in case of the regularized Kantorovich problem, as we will demonstrate below by means of a counterexample.

\begin{example}\label{ex:counterhoelder}
    Let $\Omega_1 = \Omega_2 = [0,1]$, $\gamma = 1$, and $c(x_1, x_2) \coloneqq \frac{1}{4} |x_1 - x_2|^2$. 
    Moreover, define $f\colon \R \to \R$, $f(x) \coloneqq \sgn(\sin(2\pi x))$ and, for $n\in \N$, set
    \begin{equation*}
        \alpha_1^n(x_1) \coloneqq
        \begin{cases}
            f(n\, x_1) + \frac{9}{4}, & 0 \leq x_1 \leq \frac{1}{2}, \\
            f(n\, x_1) + \frac{5}{4}, & \frac{1}{2}  < x_1 \leq 1,
        \end{cases}
        \qquad
        \alpha_2^n(x_2) \coloneqq
        \begin{cases}
           0, & 0 \leq x_2 \leq \frac{1}{2}, \\
           -\frac{1}{2}, & \frac{1}{2}  < x_2 \leq 1.
        \end{cases}         
    \end{equation*}
    In view of \eqref{eq:KantProbL2OptSystem-a}, we further set 
    \begin{equation*}
    \begin{aligned}
        \pi_n(x_1, x_2) &\coloneqq 
        \frac1\gamma \big( \alpha_1^n(x_1) + \alpha_2^n(x_2) - c(x_1, x_2)\big)_+ \\
        &= \begin{cases}
            f(n\, x_1) + \frac{9}{4} - \frac{1}{4} |x_1 - x_2|^2, & 0 \leq x_1, x_2 \leq \frac{1}{2}, \\
            f(n\, x_1) + \frac{5}{4} - \frac{1}{4} |x_1 - x_2|^2, & \frac{1}{2} < x_1 \leq 1,\; 0 \leq x_2 \leq \frac{1}{2}, \\
            f(n\, x_1) + \frac{7}{4} - \frac{1}{4} |x_1 - x_2|^2, & 0 \leq x_1 \leq \frac{1}{2},\; \frac{1}{2} < x_2 \leq 1, \\
            \big(f(n\, x_1) + \frac{3}{4} - \frac{1}{4} |x_1 - x_2|^2\big)_+, & 
            \frac{1}{2} < x_1, x_2 \leq 1, 
        \end{cases}
    \end{aligned}
    \end{equation*}
    whose weak limit (w.r.t.\ weak convergence in $L^2(\Omega)$) is 
    \begin{equation*}
    \begin{aligned}
        \pi(x_1, x_2) = \begin{cases}
            \frac{9}{4} - \frac{1}{4} |x_1 - x_2|^2, & 0 \leq x_1, x_2 \leq \frac{1}{2}, \\
            \frac{5}{4} - \frac{1}{4} |x_1 - x_2|^2, & \frac{1}{2} < x_1 \leq 1,\; 0 \leq x_2 \leq \frac{1}{2}, \\
            \frac{7}{4} - \frac{1}{4} |x_1 - x_2|^2, & 0 \leq x_1 \leq \frac{1}{2},\; \frac{1}{2} < x_2 \leq 1, \\
            \frac{7}{8} - \frac{1}{8} |x_1 - x_2|^2, & \frac{1}{2} < x_1, x_2 \leq 1.
        \end{cases}
    \end{aligned}
    \end{equation*}
    Because the system in \eqref{eq:KantProbL2OptSystem} is necessary and sufficient for optimality, $\pi_n$ is the solution of \eqref{eq:KPgam} with the cost function defined above and marginals given by
    \begin{equation*}
        \mu_1^n(x_1) \coloneqq \int_0^1 \pi_n(x_1, x_2)\,\d x_2, \quad 
        \mu_2^n(x_2) \coloneqq \int_0^1 \pi_n(x_1, x_2)\,\d x_1. 
    \end{equation*}
    Note that, for all $n\in \N$, $\mu_i^n\geq {}^1\!/\!{}_{16}$ $\lambda_i$-a.e., $i=1,2$, such that 
    Lemma~\ref{lem:quadreg}(ii) is indeed applicable.
    Clearly, the weak convergence of $\pi_n$ implies that $\mu_i^n$ converges weakly 
    to $\mu_i \in L^2(\Omega_i)$, $i=1,2$, and the pointwise bound carries over to the weak limit
    such that Lemma~\ref{lem:quadreg}(ii) also holds for the limits $\mu_1$ and $\mu_2$.
    Accordingly, if the weak limit $\pi$ were a solution to the regularized 
    Kantorovich problem, then there would be dual solutions $\alpha_1 \in L^2(\Omega_1)$ and $\alpha_2 \in L^2(\Omega_2)$ so that
    \begin{equation*}
        \pi(x_1, x_2) = \big( \alpha_1(x_1) + \alpha_2(x_2) - c(x_1, x_2)\big)_+ 
        \quad \lambda\text{-a.e.\ in } [0,1]^2.
    \end{equation*}
    Because of $\pi > 0$, this is equivalent to 
    \begin{equation}\label{eq:alpha1plusalpha2bsp}
        \alpha_1(x_1) + \alpha_2(x_2) 
        = \begin{cases}
            \frac{9}{4}, & 0 \leq x_1, x_2 \leq \frac{1}{2}, \\
            \frac{5}{4}, & \frac{1}{2} < x_1 \leq 1,\; 0 \leq x_2 \leq \frac{1}{2}, \\
            \frac{7}{4} , & 0 \leq x_1 \leq \frac{1}{2},\; \frac{1}{2} < x_2 \leq 1, \\
            \frac{7}{8} + \frac{1}{8} |x_1 - x_2|^2, & \frac{1}{2} < x_1, x_2 \leq 1,
        \end{cases}
    \end{equation}
    $\lambda$-a.e.\ in $[0,1]^2$. This, however, leads to a contradiction: picking arbitrary Lebesgue points $\hat x_2 \in \bigl(0, \frac12\bigr)$ and $\tilde x_2 \in \bigl(\frac12, 1\bigr)$ of $\alpha_2$, \eqref{eq:alpha1plusalpha2bsp} implies
    \begin{equation*}
        \begin{rcases}
            \tfrac{9}{4} - \alpha_2(\hat x_2), & 0 \leq x_1 \leq \tfrac{1}{2} \\
            \tfrac{5}{4} - \alpha_2(\hat x_2), & \tfrac{1}{2} < x_1 \leq 1
        \end{rcases}
        = \alpha(x_1) = 
        \begin{cases}
            \tfrac{7}{4} - \alpha_2(\tilde x_2), & 0 \leq x_1 \leq \tfrac{1}{2} \\
            \tfrac{7}{8} + \tfrac{1}{8} |x_1 - \tilde x_2|^2 - \alpha_2(\tilde x_2), &  \tfrac{1}{2} < x_1 \leq 1
        \end{cases}
    \end{equation*}
    $\lambda_1$-a.e.\ in $[0,1]$. While the conditions in $\bigl[0, \frac12\bigr]$ imply that $\alpha_2(\hat x_2) - \alpha_2(\tilde x_2) = \frac{1}{2}$, 
    it must hold that $\alpha_2(\hat x_2) - \alpha_2(\tilde x_2) \leq \tfrac{3}{8} < \tfrac{1}{2}$ 
    on $\bigl(\frac12, 1\bigr]$, which yields the desired contradiction.
    Therefore, the weak limit $\pi$ cannot be the solution of \eqref{eq:KPgam} associated with the 
    limits $\mu_1$ and $\mu_2$, giving in turn that $\SS_\gamma$ is not weakly continuous.
\end{example}

We compensate the lack of weak continuity by means of the convolution operators $\TT_i^\delta$, $i=1,2$, which turn weakly convergent into strongly convergent sequences.
Still, we need to prove that $\SS_\gamma$ is continuous w.r.t.\ the strong topology of $L^2(\Omega_\delta)$. 
This is what we will show next. To this end, let $\gamma>0$ and $\delta>0$ be arbitrary, 
but fixed throughout this section.
To simplify the notation, we slightly chance the notation in comparison to the beginning of this section and
the consecutive subsection. First, we suppress the sub- and superscripts $\gamma$ and $\delta$ 
in the rest of this section, except for $\SS_\gamma$ and $\pi_\gamma$ in order to 
underline the difference to the solution of the unregularized problem \eqref{eq:KP} and its solution set in 
\eqref{eq:solsetKP}. Moreover, given $(\mu_1, \mu_2)\in \MM_0(\Omega)$, and $c\in L^2(\Omega)$, we set 
$\pi_\gamma \coloneqq  \SS_\gamma(c, \mu_1, \mu_2) \in L^2(\Omega)$ 
(i.e., without the restriction operator $\EE_\delta^*$).

We start with several auxiliary boundedness results. For this purpose, let us consider 
arbitrary but fixed data $c\in L^2(\Omega)$, $\mu_i\in L^2(\Omega_i)$, $i=1,2$, with 
\begin{equation}\label{eq:normal}
    \|\mu_1\|_{L^1(\Omega_1)} = \|\mu_1\|_{L^1(\Omega_2)} \eqqcolon m.
\end{equation}
Moreover, we suppose
that the assumptions of the second part of Lemma~\ref{lem:quadreg} are satisfied, i.e., there are constants 
$\cbound > -\infty$ and $\delta > 0$ such that 
\begin{equation}\label{eq:databound}
    c \geq \cbound \quad \lambda \text{-a.e.\ in } \Omega
    \quad \text{and} \quad
    \mu_i \geq \delta \quad \lambda_i \text{-a.e.\ in } \Omega_i, \;i=1,2.
\end{equation}
Note that the latter condition is automatically fulfilled in context of the bilevel problem 
\eqref{eq:BKgam} owing to the definitions of the operators $\TT_i^\delta$.
Moreover, the penalty term in the upper level objective ensures that the cost $c$ is 
bounded in $W^{1,p}(\Omega) \embed C(\Omega)$, which ensures the other condition 
thanks to the compactness of $\Omega$.

\begin{lemma}\label{lem:piest}
    There exists a constant $C = C(\gamma, m)>0$ such that 
    \begin{equation*}
        \|\pi_\gamma\|_{L^2(\Omega)} \leq C \big( \|\mu_1\|_{L^2(\Omega_1)} \,\|\mu_2\|_{L^2(\Omega_2)} 
        + \|c\|_{L^2(\Omega)}\big).
    \end{equation*}
\end{lemma}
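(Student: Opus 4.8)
The plan is to exploit the optimality of $\pi_\gamma$ for \eqref{eq:KPgam} by comparing $\KK_\gamma(\pi_\gamma)$ against the value at a cleverly chosen feasible competitor. The natural candidate is the independent coupling $\pi_0 \coloneqq \frac{1}{m}\,\mu_1 \otimes \mu_2 \in L^2(\Omega)$, i.e.\ $\pi_0(x_1,x_2) = \frac{1}{m}\mu_1(x_1)\mu_2(x_2)$. One checks immediately that $\pi_0 \geq 0$ $\lambda$-a.e.\ (since $\mu_i \geq 0$), that $\int_{\Omega_2}\pi_0(x_1,x_2)\,\d\lambda_2(x_2) = \frac{1}{m}\mu_1(x_1)\|\mu_2\|_{L^1(\Omega_2)} = \mu_1(x_1)$ using \eqref{eq:normal}, and symmetrically that the other marginal condition holds; hence $\pi_0$ is feasible for \eqref{eq:KPgam}. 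Note also $\|\pi_0\|_{L^2(\Omega)} = \frac{1}{m}\|\mu_1\|_{L^2(\Omega_1)}\|\mu_2\|_{L^2(\Omega_2)}$ by Fubini.

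Next I would write out the optimality inequality $\KK_\gamma(\pi_\gamma) \leq \KK_\gamma(\pi_0)$, that is,
\begin{equation*}
    \int_\Omega c\,\pi_\gamma\,\d\lambda + \tfrac{\gamma}{2}\|\pi_\gamma\|_{L^2(\Omega)}^2
    \leq \int_\Omega c\,\pi_0\,\d\lambda + \tfrac{\gamma}{2}\|\pi_0\|_{L^2(\Omega)}^2,
\end{equation*}
and rearrange to isolate the quadratic term:
\begin{equation*}
    \tfrac{\gamma}{2}\|\pi_\gamma\|_{L^2(\Omega)}^2
    \leq \int_\Omega c\,(\pi_0 - \pi_\gamma)\,\d\lambda + \tfrac{\gamma}{2}\|\pi_0\|_{L^2(\Omega)}^2
    \leq \|c\|_{L^2(\Omega)}\big(\|\pi_0\|_{L^2(\Omega)} + \|\pi_\gamma\|_{L^2(\Omega)}\big) + \tfrac{\gamma}{2}\|\pi_0\|_{L^2(\Omega)}^2,
\end{equation*}
where the last step is Cauchy--Schwarz and the triangle inequality. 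This is a quadratic inequality in the scalar $t \coloneqq \|\pi_\gamma\|_{L^2(\Omega)}$ of the form $\tfrac{\gamma}{2}t^2 \leq \|c\|_{L^2(\Omega)}\,t + B$ with $B \coloneqq \|c\|_{L^2(\Omega)}\|\pi_0\|_{L^2(\Omega)} + \tfrac{\gamma}{2}\|\pi_0\|_{L^2(\Omega)}^2$. Solving it (or applying Young's inequality to absorb the linear term) yields $t \leq \frac{2}{\gamma}\|c\|_{L^2(\Omega)} + \sqrt{\frac{4B}{\gamma}}$, and then using $\sqrt{x+y}\leq\sqrt{x}+\sqrt{y}$ together with the expression for $\|\pi_0\|_{L^2(\Omega)}$ gives a bound of the asserted form, with a constant $C$ depending only on $\gamma$ and $m$.

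I do not anticipate a serious obstacle here: the only subtlety is bookkeeping the dependence of the constant on $\gamma$ and $m$ and making sure the independent coupling is genuinely admissible (which requires exactly the normalization \eqref{eq:normal}, hence the hypothesis $m = \|\mu_1\|_{L^1(\Omega_1)} = \|\mu_2\|_{L^1(\Omega_2)} > 0$ — note $m>0$ since otherwise $\mu_i \geq \delta > 0$ from \eqref{eq:databound} would be violated on a set of positive measure, or one simply observes the statement is vacuous/trivial if $m=0$). The positivity lower bound \eqref{eq:databound} and the constant $\cbound$ are in fact not needed for this particular estimate; they enter only in the subsequent lemmas invoking the dual variables, so I would simply not use them in this proof, or remark that they are available but superfluous here.
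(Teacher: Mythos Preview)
Your proof is correct, and it takes a genuinely different route from the paper. The paper works through the optimality system \eqref{eq:KantProbL2OptSystem}: it tests the marginal constraints against the dual variables $\alpha_1,\alpha_2$, combines them with \eqref{eq:KantProbL2OptSystem-a} to obtain the identity $\gamma\|\pi_\gamma\|_{L^2}^2 = \int_{\Omega_1}\mu_1\alpha_1 + \int_{\Omega_2}\mu_2\alpha_2 - \int_\Omega \pi_\gamma c$, then rewrites the first two integrals via the product measure $\tfrac{1}{m}\mu_1\otimes\mu_2$ and estimates using $(\alpha_1\oplus\alpha_2-c)_+ = \gamma\pi_\gamma$. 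Your argument stays entirely on the primal side, comparing $\KK_\gamma(\pi_\gamma)$ with the value at the independent coupling $\pi_0 = \tfrac{1}{m}\mu_1\otimes\mu_2$.

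Your approach is more elementary: it never invokes the dual variables and hence does not actually need the hypotheses in \eqref{eq:databound} (as you correctly observe), only non-negativity of the marginals and the mass normalization \eqref{eq:normal}. The paper's route, by contrast, exercises the dual machinery that is indispensable for the subsequent Lemmas~\ref{lem:L1bound} and \ref{lem:L2bound}, so it fits the section's narrative more tightly; but from a purely logical standpoint your argument is cleaner for this particular estimate.
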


\begin{proof}
    According to Lemma~\ref{lem:quadreg}, there exist $\alpha_1\in L^2(\Omega_1)$ and 
    $\alpha_2\in L^2(\Omega_2)$ such that the system in \eqref{eq:KantProbL2OptSystem} is fulfilled. 
    Multiplying \eqref{eq:KantProbL2OptSystem-b} and \eqref{eq:KantProbL2OptSystem-c} 
    with $\alpha_1$ and $\alpha_2$, respectively, integrating and adding the resulting equations leads to
    \begin{equation}\label{eq:SoluL2NormEq}
	    \gamma \|\pi\|_{L^2(\Omega)}^2 
	    = \int_{\Omega_1}\mu_1\alpha_1\, \d \lambda_1 + 
	     \int_{\Omega_2}\mu_2 \alpha_2 \, \d\lambda_2 - \int_\Omega\pi c \,\d\lambda,
    \end{equation}
    where we also used \eqref{eq:KantProbL2OptSystem-a}.
    Exploiting the normalization condition in \eqref{eq:normal},	we obtain
    \begin{equation*}
        \int_{\Omega_1}\mu_1\alpha_1\,\d \lambda_1 
        = \frac{1}{m}\int_{\Omega_1}\mu_1\alpha_1\int_{\Omega_2}\mu_2 \,\d\lambda_2\,\d\lambda_1
        = \frac{1}{m}\int_\Omega\left(\mu_1\otimes\mu_2\right)\alpha_1\d\lambda
    \end{equation*}
    and an analogous equation for $\mu_2\alpha_2$. Herein, 
    $(\mu_1\otimes\mu_2)\coloneqq\mu_1(x_1)\mu_2(x_2)$ for $\lambda$-a.a.\ $(x_1,x_2)\in\Omega$ 
    refers to the direct product of $\mu_1$ and $\mu_2$. One easily verifies that 
    $\mu_1 \otimes \mu_2 \in L^2(\Omega)$ with $\|\mu_1\otimes \mu_2\|_{L^2(\Omega)} =
    \|\mu_1\|_{L^2(\Omega_1)} \|\mu_2\|_{L^2(\Omega_2)}$. 
    Therefore, \eqref{eq:SoluL2NormEq} allows us to estimate
    \begin{align*}
        \gamma\|\pi\|_{L^2(\Omega)}^2 
        &\leq \frac{1}{m}\int_\Omega (\mu_1\otimes\mu_2)(\alpha_1 \oplus \alpha_2-c)_+\d \lambda
        + \frac{1}{m}\int_\Omega(\mu_1\otimes\mu_2)c \,\d\lambda - \int_\Omega\pi c\,\d\lambda \\
        &\leq \gamma\,m^{-1}\,\|\pi\|_{L^2(\Omega)} \|\mu_1\|_{L^2(\Omega_1)} \|\mu_2\|_{L^2(\Omega_2)} \\
        & \quad + m^{-1}\,\|\mu_1\|_{L^2(\Omega_1)} \|\mu_2\|_{L^2(\Omega_2)} \|c\|_{L^2(\Omega)} + 
        \|c\|_{L^2(\Omega)}\|\pi\|_{L^2(\Omega)}.
    \end{align*}
    and Young's inequality then gives the result.
\end{proof}

The next two lemmas address the boundedness of the dual variables $\alpha_1$ and $\alpha_2$ 
from Lemma~\ref{lem:quadreg}. We first observe that the dual variables cannot be bounded by the data in any norm, 
since they are not unique: if $(\alpha_1, \alpha_2)$ satisfies \eqref{eq:KantProbL2OptSystem} for $\pi_\gamma$,
then, for any $r\in \R$, $(\alpha_1 + r, \alpha_2 - r)$ does so, too.
In the rest of the paper, we therefore pick a particular dual solution, namely the one, where $r$ is chosen such that 
\begin{equation}\label{eq:vanishmean}
    \int_{\Omega_2} \alpha_2 \, \d \lambda_2 = 0.
\end{equation}
We call a dual solution $(\alpha_1, \alpha_2)$ satisfying \eqref{eq:vanishmean} \emph{zero-mean dual solution}. 
The above considerations show that, under the assumptions of Lemma~\ref{lem:quadreg}, there always 
exists a zero-mean dual solution. 
We further underline that even the zero-mean dual solution is in general not unique as the counterexample in \cite[Section~2.3]{LMM21} illustrates. Still, we can show that every zero-mean dual solution is bounded by the data $c$, $\mu_1$, and $\mu_2$. 
We start with an $L^1$ bound in the following

\begin{lemma}\label{lem:L1bound}
    Every zero-mean dual solution satisfies 
    \begin{equation*}
        \|\alpha_1\|_{L^1(\Omega_1)} + \|\alpha_2\|_{L^1(\Omega_2)} \\
        \leq C \big( \|\mu_1\|_{L^2(\Omega_1)} \|\mu_2\|_{L^2(\Omega_2)}  + \|c\|_{L^2(\Omega)} +1 \big)^2   
    \end{equation*}
    with a constant $C = C(\gamma, m, \delta) > 0$.
\end{lemma}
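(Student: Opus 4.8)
The goal is an $L^1$-bound on a zero-mean dual solution $(\alpha_1,\alpha_2)$ in terms of the data. The natural starting point is the complementarity equation \eqref{eq:KantProbL2OptSystem-a}, which reads $\gamma\pi_\gamma = (\alpha_1\oplus\alpha_2 - c)_+$ $\lambda$-a.e.\ in $\Omega$. Since $\mu_i \geq \delta > 0$, the marginal constraints \eqref{eq:KantProbL2OptSystem-b}--\eqref{eq:KantProbL2OptSystem-c} force $\pi_\gamma$ to be strictly positive on a substantial portion of $\Omega$: for every $x_1$, the slice $\int_{\Omega_2}\pi_\gamma(x_1,x_2)\,\d\lambda_2(x_2) = \mu_1(x_1) \geq \delta$, hence the set $\{x_2 : \pi_\gamma(x_1,x_2) > 0\}$ has positive $\lambda_2$-measure, and on that set $\alpha_1(x_1) + \alpha_2(x_2) - c(x_1,x_2) = \gamma\pi_\gamma(x_1,x_2) > 0$, i.e.\ $\alpha_1(x_1) = \gamma\pi_\gamma(x_1,x_2) + c(x_1,x_2) - \alpha_2(x_2)$ there. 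This is the key mechanism: $\alpha_1(x_1)$ is \emph{pointwise pinned} to a value of the form $\gamma\pi_\gamma + c - \alpha_2$ on a set of positive measure, and similarly for $\alpha_2(x_2)$.

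**Main steps.** First I would fix a reference set on which $\alpha_2$ is controlled. Using \eqref{eq:vanishmean} and the pigeonhole principle (or a Chebyshev/averaging argument), there is a measurable set $A_2 \subset \Omega_2$ with $\lambda_2(A_2) \geq |\Omega_2|/2$, say, on which $|\alpha_2| \leq 2\|\alpha_2\|_{L^1(\Omega_2)}/|\Omega_2|$ — but this still involves the unknown $\|\alpha_2\|_{L^1}$, so instead I would argue in two stages. Stage one: bound $\alpha_1$ pointwise by integrating the relation $\alpha_1(x_1) = \gamma\pi_\gamma(x_1,x_2) + c(x_1,x_2) - \alpha_2(x_2)$ over the (positive-measure, but $x_1$-dependent) set where $\pi_\gamma(x_1,\cdot) > 0$, and likewise get $\alpha_2(x_2)$ in terms of $\gamma\pi_\gamma + c - \alpha_1$. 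Combining, one obtains for a.e.\ $(x_1,x_2)$ in the product support that $\alpha_1(x_1)$ and $\alpha_2(x_2)$ differ from integrals of $\gamma\pi_\gamma + c$ against normalized measures. Integrating against $\mu_1$ (resp.\ $\mu_2$) and using $\|\mu_i\|_{L^1}=m$, $\mu_i\geq\delta$ to turn integrals over the support into full integrals (the complement where $\pi_\gamma=0$ contributes zero to the marginal), yields
\begin{equation*}
    \|\alpha_1\|_{L^1(\Omega_1)} + \|\alpha_2\|_{L^1(\Omega_2)}
    \leq C\big(\gamma\|\pi_\gamma\|_{L^1(\Omega)} + \|c\|_{L^1(\Omega)} + \|\alpha_1\|_{L^1(\Omega_1)}\,?\big),
\end{equation*}
where the circularity must be broken by the zero-mean normalization on $\alpha_2$ and the factor $\delta$. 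Stage two: absorb and close. Bound $\|\pi_\gamma\|_{L^1(\Omega)} \leq |\Omega|^{1/2}\|\pi_\gamma\|_{L^2(\Omega)}$ and invoke Lemma~\ref{lem:piest} to replace $\|\pi_\gamma\|_{L^2}$ by $C(\|\mu_1\|_{L^2}\|\mu_2\|_{L^2} + \|c\|_{L^2})$; bound $\|c\|_{L^1} \leq |\Omega|^{1/2}\|c\|_{L^2}$. The square on the right-hand side of the claimed estimate strongly suggests that after substituting Lemma~\ref{lem:piest}, a product $\|\mu_1\|_{L^2}\|\mu_2\|_{L^2}$ already appears, and that somewhere a second such factor (or a term like $(\cdots)\cdot(\cdots)$) enters — most likely because pinning $\alpha_1$ involves dividing by $\lambda_2(\{\pi_\gamma(x_1,\cdot)>0\})$, which one only knows is $\geq$ something like $\delta^2/(\gamma m \cdot \|\pi_\gamma(x_1,\cdot)\|_{L^\infty}^{?})$ via an $L^1$-vs-support estimate, reintroducing a norm of $\pi_\gamma$ and hence of the data. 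Carefully tracking these support-size lower bounds is where the quadratic dependence is born.

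**Main obstacle.** The delicate point is controlling the $x_1$-dependent support $\{x_2 : \pi_\gamma(x_1,x_2)>0\}$ from below: the identity $\alpha_1(x_1) = \gamma\pi_\gamma(x_1,x_2) + c(x_1,x_2) - \alpha_2(x_2)$ only holds there, and if that set is tiny then averaging over it amplifies errors. One knows $\int_{\Omega_2}\pi_\gamma(x_1,\cdot)\,\d\lambda_2 = \mu_1(x_1) \in [\delta, \|\mu_1\|_{L^\infty}]$, but turning a lower bound on the $L^1$-mass of $\pi_\gamma(x_1,\cdot)$ into a lower bound on its support requires an upper bound on $\|\pi_\gamma(x_1,\cdot)\|_{L^\infty}$ or $L^2$, which by \eqref{eq:KantProbL2OptSystem-a} is $\leq \tfrac1\gamma(\|\alpha_1\|_\infty + \|\alpha_2\|_\infty + \|c\|_\infty)$ — circular in the $\alpha$'s again. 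I expect the fix is to avoid pointwise-in-$x_1$ reasoning and instead integrate the defining relation directly against $\mu_1\otimes\mu_2/m$ (exactly the trick used in Lemma~\ref{lem:piest}), which automatically weights by $\pi_\gamma$ on the right sets and produces the data-dependent bound with the square coming from one application of Cauchy–Schwarz combined with one use of Lemma~\ref{lem:piest}. The bookkeeping of constants $C(\gamma,m,\delta)$ — in particular the $\delta^{-1}$ factors from $\mu_i\geq\delta$ used to pass from weighted to unweighted $L^1$-norms of the $\alpha_i$ — is the main technical labor.
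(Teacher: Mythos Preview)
Your proposal identifies several correct ingredients --- the primal bound from Lemma~\ref{lem:piest}, the role of the lower bound $\mu_i\geq\delta$, the integration against $\mu_1\otimes\mu_2/m$, and the zero-mean normalization for decoupling $\alpha_1$ from $\alpha_2$ --- but it has a genuine gap that you yourself flag and do not close.

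The identity $\gamma\pi_\gamma = (\alpha_1\oplus\alpha_2 - c)_+$ only gives information on the set $\{\alpha_1\oplus\alpha_2 > c\}$. Your ``pinning'' argument and your proposed fix of integrating the defining relation against $\mu_1\otimes\mu_2/m$ both live entirely on that set; neither says anything about $(\alpha_1\oplus\alpha_2 - c)_-$. To bound $\|\alpha_1\oplus\alpha_2\|_{L^1(\Omega)}$ you need both parts, and the negative part is precisely where your circularity lives: on $\{\pi_\gamma = 0\}$ the optimality system gives no equation, only the inequality $\alpha_1\oplus\alpha_2\leq c$, which is a one-sided bound and useless for $L^1$ control from below.

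The paper resolves this via strong duality (Lemma~\ref{lem:quadreg}(iii)), which you never invoke. From $\KK_\gamma(\pi_\gamma)=\Phi_\gamma(\alpha_1,\alpha_2)$ and the explicit form of $\Phi_\gamma$ one obtains, after the $\mu_1\otimes\mu_2/m$ rewriting, a two-sided control on $\tfrac{1}{m}\int(\mu_1\otimes\mu_2)(\alpha_1\oplus\alpha_2-c)\,\d\lambda$ in terms of $\|c\|_{L^2}$, $\|\pi_\gamma\|_{L^2}$, and $\|\mu_1\otimes\mu_2\|_{L^2}$. Splitting this signed integral into positive and negative parts, the positive part is $\tfrac{\gamma}{m}\int(\mu_1\otimes\mu_2)\pi_\gamma$ (already controlled), while the negative part satisfies $\int(\mu_1\otimes\mu_2)(\alpha_1\oplus\alpha_2-c)_-\geq \delta^2\int(\alpha_1\oplus\alpha_2-c)_-$ by the pointwise bound $\mu_1\otimes\mu_2\geq\delta^2$. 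This is where $\delta$ enters, and it yields $\|(\alpha_1\oplus\alpha_2-c)_-\|_{L^1(\Omega)}$ directly --- no support-size estimates, no circularity. Adding $\|(\alpha_1\oplus\alpha_2-c)_+\|_{L^1}=\gamma\|\pi_\gamma\|_{L^1}$ and $\|c\|_{L^1}$ gives $\|\alpha_1\oplus\alpha_2\|_{L^1(\Omega)}$, and the square in the final bound appears because Lemma~\ref{lem:piest} is applied to terms that are already products, e.g.\ $\|c\|_{L^2}\|\pi_\gamma\|_{L^2}$ and $\|\mu_1\otimes\mu_2\|_{L^2}\|\pi_\gamma\|_{L^2}$. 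The extraction of the individual $\|\alpha_i\|_{L^1(\Omega_i)}$ from $\|\alpha_1\oplus\alpha_2\|_{L^1(\Omega)}$ then uses the zero-mean condition exactly as you anticipate.
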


\begin{proof}
    Lemma~\ref{lem:quadreg}(iii) yields 
    \begin{equation*}
    \begin{aligned}
        \|c\|_{L^2(\Omega)} \, \|\pi_\gamma\|_{L^2(\Omega)} 
        & \geq - \KK_\gamma(\pi_\gamma) \\
        & = - \Phi_\gamma(\alpha_1, \alpha_2) \\
        & \geq - \frac1m \int_\Omega (\mu_1 \otimes \mu_2)(\alpha_1 \oplus \alpha_2 - c) \,\d \lambda
        - \frac1m \int_\Omega (\mu_1 \otimes \mu_2) c \, \d \lambda,
    \end{aligned}
    \end{equation*}
    which, in combination with \eqref{eq:KantProbL2OptSystem-a}, leads to 
    \begin{equation*}
    \begin{aligned}
        & \|c\|_{L^2(\Omega)} \big( \|\pi_\gamma\|_{L^2(\Omega)} + \|\mu_1 \otimes \mu_2\|_{L^2(\Omega)}\big) \\
        & \quad \geq  - \frac1m\int_\Omega (\mu_1 \otimes \mu_2)(\alpha_1 \oplus \alpha_2 - c)_+ \,\d \lambda
        + \frac1m \int_\Omega (\mu_1 \otimes \mu_2)(\alpha_1 \oplus \alpha_2 - c)_- \,\d \lambda \\
        & \quad \geq  - \frac{\gamma}{m} \int_\Omega (\mu_1 \otimes \mu_2) \pi_\gamma \,\d \lambda 
        + \frac{\delta^2}{m} \int_\Omega (\alpha_1 \oplus \alpha_2 - c)_- \,\d \lambda,
    \end{aligned}
    \end{equation*}
    where $\delta > 0$ is the constant from \eqref{eq:databound}. 
    Thus, thanks to Lemma~\ref{lem:piest}, we arrive at 
    \begin{equation*}
    \begin{aligned}
        \|\alpha_1 \oplus \alpha_2 \|_{L^1(\Omega)} 
        & \leq \|(\alpha_1 \oplus \alpha_2 - c)_+ \|_{L^1(\Omega)} 
        + \|(\alpha_1 \oplus \alpha_2 - c)_- \|_{L^1(\Omega)} 
        + \|c\|_{L^1(\Omega)} \\
        &  \leq 
        \gamma \|\pi_\gamma\|_{L^1(\Omega)} + \|c\|_{L^1(\Omega)} \\
         & \quad + \tfrac{m}{\delta^2} \big(
        \begin{aligned}[t]
            & \|c\|_{L^2(\Omega)} \|\pi_\gamma\|_{L^2(\Omega)} \\
            & + \tfrac{1}{m}\|c\|_{L^2(\Omega)} \|\mu_1 \otimes \mu_2\|_{L^2(\Omega)}
            + \tfrac{\gamma}{m} \|\mu_1 \otimes \mu_2\|_{L^2(\Omega)} \|\pi_\gamma\|_{L^2(\Omega)}\big)            
        \end{aligned} \\
        & \leq C \big( 
        \|\mu_1\|_{L^2(\Omega_1)} \|\mu_2\|_{L^2(\Omega_2)}  + \|c\|_{L^2(\Omega)} +1 \big)^2 
    \end{aligned}
    \end{equation*}
    with a constant $C>0$ depending on $\gamma$, $m$,  and $\delta$.
    To deduce a bound for $\alpha_1$ and $\alpha_2$ individually from this, we employ \eqref{eq:vanishmean} 
    to obtain
    \begin{equation*}
    \begin{aligned}
        \|\alpha_1 \oplus \alpha_2 \|_{L^1(\Omega)} 
        & = \sup_{\substack{\phi\in L^\infty(\Omega), \\ \|\phi\|_{\infty}\leq1}} 
        \int_\Omega\phi (\alpha_1 \oplus \alpha_2) \,\d \lambda \\
        &\geq \sup_{\substack{\phi_1\in L^\infty(\Omega_1), \\ \|\phi_1\|_\infty \leq1}} 
        \int_\Omega(\phi_1\otimes 1)(\alpha_1 \oplus \alpha_2) \,\d \lambda 
        = |\Omega_2| \, \|\alpha_1\|_{L^1(\Omega_1)}.
    \end{aligned}
    \end{equation*}
    Finally, the $L^1$-norm of $\alpha_2$ is estimated by 
    \begin{equation*}
        \|\alpha_2\|_{L^1(\Omega_2)} 
        \leq |\Omega_1|^{-1} \big(\|\alpha_1 \oplus \alpha_2\|_{L^1(\Omega)} 
        + |\Omega_2| \|\alpha_1\|_{L^1(\Omega_1)}\big),
    \end{equation*}
    which, together with the previous estimates, yields the claim.
\end{proof}

\begin{lemma}\label{lem:L2bound} 
    There is a constant $C = C(\gamma, m, \delta, \cbound) > 0$ such that 
    \begin{equation*}
         \|\alpha_1\|_{L^2(\Omega_1)} + \|\alpha_2\|_{L^2(\Omega_2)} \\
        \leq C \big( \|\mu_1\|_{L^2(\Omega_1)} \|\mu_2\|_{L^2(\Omega_2)}  + \|c\|_{L^2(\Omega)} +1 \big)^{6}
   \end{equation*}
    holds for every zero-mean dual solution.
\end{lemma}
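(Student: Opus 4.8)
The plan is to bootstrap from the $L^1$ bound of Lemma~\ref{lem:L1bound} to the desired $L^2$ bound by combining a one-sided pointwise estimate for the dual variables with a measure-theoretic ``good point'' argument on the active set. Throughout, write $R \coloneqq \|\mu_1\|_{L^2(\Omega_1)}\|\mu_2\|_{L^2(\Omega_2)} + \|c\|_{L^2(\Omega)} + 1 \geq 1$, so that Lemma~\ref{lem:piest} gives $\|\pi_\gamma\|_{L^2(\Omega)} \leq C R$ and Lemma~\ref{lem:L1bound} gives $\|\alpha_1\|_{L^1(\Omega_1)} + \|\alpha_2\|_{L^1(\Omega_2)} \leq C R^2$, where $(\alpha_1,\alpha_2)$ is a fixed zero-mean dual solution and $C$ denotes a generic constant (depending on $\gamma,m,\delta,\cbound$) that may change from line to line. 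Setting $u \coloneqq \alpha_1 \oplus \alpha_2 - c$, relation \eqref{eq:KantProbL2OptSystem-a} reads $\gamma\pi_\gamma = (u)_+$, so in particular $\alpha_1 \oplus \alpha_2 \leq \gamma\pi_\gamma + c$ $\lambda$-a.e.\ in $\Omega$, with equality on the active set $\{u>0\}$. Integrating this inequality in $x_2$ over $\Omega_2$ and using the zero-mean condition \eqref{eq:vanishmean} together with \eqref{eq:KantProbL2OptSystem-b} yields $|\Omega_2|\,\alpha_1(x_1) \leq \gamma\mu_1(x_1) + \|c(x_1,\cdot)\|_{L^1(\Omega_2)}$ for $\lambda_1$-a.e.\ $x_1$, hence, by Fubini and Cauchy--Schwarz, $\|(\alpha_1)_+\|_{L^2(\Omega_1)} \leq CR$; integrating in $x_1$ and using \eqref{eq:KantProbL2OptSystem-c} and $|\int_{\Omega_1}\alpha_1\,\d\lambda_1| \leq \|\alpha_1\|_{L^1(\Omega_1)}$ gives likewise $\|(\alpha_2)_+\|_{L^2(\Omega_2)} \leq C(\|\mu_2\|_{L^2(\Omega_2)} + \|c\|_{L^2(\Omega)} + \|\alpha_1\|_{L^1(\Omega_1)}) \leq CR^2$.

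The crux is the lower bounds, i.e., estimates of $(\alpha_1)_-$ and $(\alpha_2)_-$. The difficulty is that the slice of the active set $A(x_1) \coloneqq \{x_2 \in \Omega_2 : u(x_1,x_2) > 0\}$ may be arbitrarily small. The key quantitative observation is that, since $\pi_\gamma$ vanishes off $A(x_1)$ and $\int_{\Omega_2}\pi_\gamma(x_1,\cdot)\,\d\lambda_2 = \mu_1(x_1) \geq \delta$ by \eqref{eq:KantProbL2OptSystem-b} and \eqref{eq:databound}, Cauchy--Schwarz gives $\delta \leq \lambda_2(A(x_1))^{1/2}\,\|\pi_\gamma(x_1,\cdot)\|_{L^2(\Omega_2)}$, that is, $\lambda_2(A(x_1)) \geq \delta^2/\|\pi_\gamma(x_1,\cdot)\|_{L^2(\Omega_2)}^2$ for $\lambda_1$-a.e.\ $x_1$. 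By Chebyshev's inequality applied to $(\alpha_2)_+\in L^2(\Omega_2)$, the superlevel set $\{(\alpha_2)_+ > K\}$ has measure at most $\|(\alpha_2)_+\|_{L^2(\Omega_2)}^2/K^2$, which is strictly smaller than $\lambda_2(A(x_1))$ for $K \coloneqq 2\|(\alpha_2)_+\|_{L^2(\Omega_2)}\lambda_2(A(x_1))^{-1/2}$; hence there exists $x_2\in A(x_1)$ with $(\alpha_2)_+(x_2) \leq K \leq (2/\delta)\,\|(\alpha_2)_+\|_{L^2(\Omega_2)}\,\|\pi_\gamma(x_1,\cdot)\|_{L^2(\Omega_2)}$. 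Since $\alpha_1(x_1) + \alpha_2(x_2) > c(x_1,x_2) \geq \cbound$ for such $x_2$, we get $\alpha_1(x_1) \geq \cbound - (\alpha_2)_+(x_2)$, so $(\alpha_1)_-(x_1) \leq |\cbound| + (2/\delta)\,\|(\alpha_2)_+\|_{L^2(\Omega_2)}\,\|\pi_\gamma(x_1,\cdot)\|_{L^2(\Omega_2)}$ for $\lambda_1$-a.e.\ $x_1$. Squaring, integrating over $\Omega_1$, and using $\int_{\Omega_1}\|\pi_\gamma(x_1,\cdot)\|_{L^2(\Omega_2)}^2\,\d\lambda_1 = \|\pi_\gamma\|_{L^2(\Omega)}^2$ then yields $\|(\alpha_1)_-\|_{L^2(\Omega_1)} \leq C\bigl(1 + \|(\alpha_2)_+\|_{L^2(\Omega_2)}\|\pi_\gamma\|_{L^2(\Omega)}\bigr) \leq CR^3$.

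Running the symmetric argument with the roles of $\Omega_1$ and $\Omega_2$ interchanged — now combining $\int_{\Omega_1}\pi_\gamma(\cdot,x_2)\,\d\lambda_1 = \mu_2(x_2)\geq\delta$ with the $L^2$ bound on $(\alpha_1)_+$ — gives $\|(\alpha_2)_-\|_{L^2(\Omega_2)} \leq C\bigl(1 + \|(\alpha_1)_+\|_{L^2(\Omega_1)}\|\pi_\gamma\|_{L^2(\Omega)}\bigr) \leq CR^2$. Adding the four partial estimates and using $R\geq 1$ gives $\|\alpha_1\|_{L^2(\Omega_1)} + \|\alpha_2\|_{L^2(\Omega_2)} \leq CR^3 \leq CR^6$, which is the claim (the sixth power is not sharp, but any fixed power suffices).

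The main obstacle is precisely the possible degeneration of the slice $A(x_1)$: a naïve estimate bounding $|\alpha_1(x_1)|$ by $\lambda_2(A(x_1))^{-1}$ times an $L^1$-type quantity produces a factor $\|\pi_\gamma(x_1,\cdot)\|_{L^2(\Omega_2)}^2$, whose square (which appears after squaring and integrating in $x_1$) is not integrable under the only available bound $\pi_\gamma\in L^2(\Omega)$. The ``good point'' argument circumvents this by extracting a single $x_2\in A(x_1)$ at which $(\alpha_2)_+$ is small of order $\lambda_2(A(x_1))^{-1/2}$ — exploiting that $(\alpha_2)_+$, unlike $\alpha_2$ itself, was already controlled in $L^2$ — so that only the first power of $\|\pi_\gamma(x_1,\cdot)\|_{L^2(\Omega_2)}$ enters and Fubini closes the estimate. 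A minor technical point is that the identities and inequalities from the optimality system hold only $\lambda$-a.e., so that $A(x_1)$ and the selection of $x_2$ must be read via Fubini and essential infima rather than literal pointwise values; this is routine.
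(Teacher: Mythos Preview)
Your argument is correct and takes a genuinely different route from the paper's. For the positive parts, the paper defines a ``good'' subset $\tilde\Omega_2\subset\Omega_2$ of measure at least $|\Omega_2|/2$ on which $|\alpha_2|$ is controlled by the $L^1$ bound, and then estimates $\|(\alpha_1)_+\|_{L^2}$ through $\|(\alpha_1\oplus\alpha_2)_+\|_{L^2}$ on $\Omega_1^+\times\tilde\Omega_2$; you instead integrate the pointwise inequality $\alpha_1\oplus\alpha_2\leq\gamma\pi_\gamma+c$ directly in one variable and feed in the zero-mean condition and the marginal constraints, which is shorter and already gives $\|(\alpha_1)_+\|_{L^2}\leq CR$. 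For the negative parts, the paper argues by contradiction via a level-set estimate on $(\alpha_2)_+$ and the constraint \eqref{eq:KantProbL2OptSystem-b}, obtaining in fact an $L^\infty$ bound $(\alpha_1)_-\leq R$ with $R\sim CR^6$; your ``good point in the active slice'' argument---lower-bounding $\lambda_2(A(x_1))$ through $\mu_1\geq\delta$ and Cauchy--Schwarz, then applying Chebyshev to $(\alpha_2)_+\in L^2$---yields only an $L^2$ bound, but with the better exponent $\|(\alpha_1)_-\|_{L^2}\leq CR^3$. Thus your proof is sharper in the final power ($R^3$ versus $R^6$), while the paper's step~(ii) delivers the qualitatively stronger $L^\infty$ control on the negative part. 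Both approaches rely on the same ingredients (Lemmas~\ref{lem:piest} and~\ref{lem:L1bound}, the optimality system, and $\mu_i\geq\delta$, $c\geq\cbound$), and your handling of the a.e.\ issues via Fubini is sound.
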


\begin{proof}
    We proceed in two steps: in a first step, 
    we show the boundedness of the positive parts of $\alpha_1$ and $\alpha_2$ (i); 
    in a second step, we derive bounds for their negative parts (ii).
		
    Ad~(i): Let us denote  the bound from Lemma \ref{lem:L1bound} by 
    $M > 0$, i.e., in particular $\|\alpha_2\|_{L^1(\Omega_2)} \leq M$, 
    and define, up to sets of zero Lebesgue measure, the subset
    \begin{align*}
	    \tilde{\Omega}_2\coloneqq
	    \left\{x_2\in\Omega_2\colon|\alpha_2(x_2)|\leq\frac{2M}{|\Omega_2|}\right\} \subset\Omega_2.
    \end{align*}
    Then it follows by construction that
    \begin{equation*}
        M \geq
        \int_{\Omega_2}|\alpha_2(x_2)|\d \lambda_2
        \geq \int_{\Omega_2\setminus\tilde{\Omega}_2}|\alpha_2(x_2)|\d\lambda_2
        \geq\frac{2M}{|\Omega_2|} \, |\Omega_2\setminus\tilde{\Omega}_2|,
    \end{equation*}
    which in turn implies
    \begin{equation}\label{eq:OmegaTildeMass}
        \frac{|\Omega_2|}{2}\geq |\Omega_2\setminus\tilde{\Omega}_2| = |\Omega_2|-|\tilde{\Omega}_2|
        \quad \Longrightarrow \quad 
        |\tilde{\Omega}_2|\geq\frac{|\Omega_2|}{2}>0.
    \end{equation}
    Furthermore, we define, up to sets of zero Lebesgue measure, the sets
    \begin{equation*}
        \Omega_1^+ \coloneqq \{x_1\in\Omega_1\colon\alpha_1(x_1)\geq0\}
    \end{equation*}
    as well as
    \begin{equation*}
        \tilde\Omega^+ \coloneqq \{ (x_1, x_2) \in \Omega_1^+ \times \tilde\Omega_2 \colon
        \alpha_1(x_1) + \alpha_2(x_2) \geq 0 \}.
    \end{equation*}
    Then, by construction, it holds
    \begin{equation*}
        0 \leq \alpha_1(x_1) \leq  - \alpha_2(x_2) \leq \frac{2M}{|\Omega_2|}
        \quad \lambda\text{-a.e.-in } (\Omega_1^+ \times \tilde\Omega_2)\setminus \tilde\Omega^+.
    \end{equation*}
    Together with \eqref{eq:OmegaTildeMass}, this yields
    \begin{equation}\label{eq:a1plusest}
    \begin{aligned}
        \|(\alpha_1)_+\|_{L^2(\Omega_1)}^2
        &= \frac{1}{|\tilde\Omega_2|} \int_{\tilde\Omega_2} 
        \int_{\Omega_1^+} |\alpha_1|^2 \,\d\lambda_1\d\lambda_2 \\
        &\leq \frac{2}{|\Omega_2|} \Bigg( \int_{\tilde\Omega^+} |\alpha_1|^2 \, \d\lambda
        + \int_{(\Omega_1^+ \times \tilde\Omega_2)\setminus \tilde\Omega^+} |\alpha_1|^2 \, \d\lambda\Bigg)\\
        &\leq \frac{2}{|\Omega_2|}  \int_{\tilde\Omega^+} |\alpha_1|^2 \, \d\lambda
        + \frac{8\,|\Omega_1|}{|\Omega_2|^2} \,M^2.
    \end{aligned}
    \end{equation}
    In order to estimate the first term on the right hand side, we first observe that
    \begin{equation}\label{eq:a12a1plus}
    \begin{aligned}
        \|(\alpha_1 \oplus \alpha_2)_+\|_{L^2(\Omega)}^2
        & \geq \int_{\tilde{\Omega}^+}(\alpha_1\oplus \alpha_2)^2\d\lambda \\
        & \geq  \int_{\tilde{\Omega}^+} |\alpha_1|^2\d\lambda 
        - 2 \int_{\tilde{\Omega}^+} |\alpha_1|\, |\alpha_2|\,\d\lambda\\
       & \geq  \int_{\tilde{\Omega}^+} |\alpha_1|^2\d\lambda 
       - 2\,\|\alpha_1\|_{L^1(\Omega_1)} \,\|\alpha_2\|_{L^1(\Omega_2)} \\
        & \geq \|\alpha_1\|_{L^2(\tilde{\Omega}^+)}^2  - 2 \, M^2.
    \end{aligned}    
    \end{equation}
    The left hand side in the above equation is estimated by means of \eqref{eq:KantProbL2OptSystem-a} and 
    Lemma~\ref{lem:piest} as follows:
    \begin{equation}\label{eq:a12plusest}
    \begin{aligned}
        \|(\alpha_1 \oplus \alpha_2)_+\|_{L^2(\Omega)}^2
        & \leq 2\left(\int_\Omega(\alpha_1 \oplus \alpha_2 - c)_+^2\d\lambda 
        +\int_\Omega c^2 \d\lambda\right) \\
        & = 2\big(\gamma^2\|\pi_\gamma\|_{L^2(\Omega)}^2 +\|c\|_{L^2(\Omega)}^2\big) \\
        & \leq C \big( \|\mu_1\|_{L^2(\Omega_1)}^2 \,\|\mu_2\|_{L^2(\Omega_2)}^2 + \|c\|_{L^2(\Omega)}^2\big).
    \end{aligned}
    \end{equation}
    Inserting \eqref{eq:a12a1plus} and \eqref{eq:a12plusest} into \eqref{eq:a1plusest} and employing the definition 
    of $M$ as the bound from Lemma~\ref{lem:L1bound} then yields
    \begin{equation}\label{eq:a1plusbound}
        \|(\alpha_1)_+\|_{L^2(\Omega_1)} \leq 
        C \big( \|\mu_1\|_{L^2(\Omega_1)} \|\mu_2\|_{L^2(\Omega_2)}  + \|c\|_{L^2(\Omega)} +1 \big)^2,
    \end{equation}
    with a positive constant $C$ depending on $\gamma$, $m$, and $\delta$. 
    Interchaninging the roles of $\alpha_1$ and $\alpha_2$ implies the same bound for $(\alpha_2)_+$.
    
    Ad (ii): To show the bound for the negative part, we argue similarly to the second part of the proof of 
    \cite[Theorem~2.11]{LMM21}.
    Given $r \in \R$, we consider the set (defined up to sets of zero Lebesgue measure)
	\begin{equation*}
        \hat{\Omega}_2^r \coloneqq 
        \{x_2\in\Omega_2 \colon (\alpha_2)_+(x_2) > r +\cbound\}\subset\Omega_2.
    \end{equation*}
    Recall that $\cbound>-\infty$ is defined to be a lower bound of the cost function $c$, see \eqref{eq:databound}. For any $r > -\cbound$, the Lebesgue measure of this subset can be estimated by
    \begin{equation*}
        |\hat{\Omega}_2^r| 
        \leq \frac{1}{r+\cbound}\int_{\hat{\Omega}_2^r}(\alpha_2)_+\d\lambda_2 
        \leq \frac{1}{r+\cbound}\|\alpha_2\|_{L^1(\Omega_2)} \leq \frac{M}{r +\cbound},
    \end{equation*}
    where $M$ again denotes the bound from Lemma~\ref{lem:L1bound}.
    For all $r > -\cbound$, we thus obtain
    \begin{align*}
        \int_{\Omega_2}(-r+\alpha_2-\cbound)_+\,\d\lambda_2
        & \leq \int_{\Omega_2}(-(r+\cbound)+(\alpha_2)_+)_+\,\d\lambda_2 \\
        & = \int_{\hat{\Omega}_2^r}\big(-(r+\cbound)+(\alpha_2)_+\big) \,\d\lambda_2 \\
        & \leq |\hat{\Omega}_2^r|^\frac12 \,\|(\alpha_2)_+\|_{L^2(\Omega_2)} 
        \leq \Big(\frac{M}{r+\cbound}\Big)^\frac12 K,
    \end{align*}
    where $K$ is short for the bound for $\|(\alpha_2)_+\|_{L^2(\Omega_2)}$ from step (i), i.e., 
    the right-hand side in \eqref{eq:a1plusbound}. Therefore, if we set
    \begin{equation}\label{eq:defN0}
        R \coloneqq \frac{M \,K^2}{\gamma^2 \, \delta^2} - \cbound + 1 > -\cbound,
    \end{equation}                
    then 
    \begin{equation*}
        \int_{\Omega_2}(-R+\alpha_2-\cbound)_+\,\d\lambda_2 < \gamma\,\delta.
    \end{equation*}
    Now assume that $\alpha_1 \leq - R$ $\lambda_1$-a.e.\ on a set $E\subset \Omega_1$ of positive 
    Lebesgue measure. Then
    \begin{equation*}
        \int_{\Omega_2} (\alpha_1 \oplus \alpha_2 - c)_+ \,\d\lambda_2
        \leq \int_{\Omega_2} (-R + \alpha_2 - \cbound)_+ \,\d\lambda_2
        < \gamma\,\delta \leq \gamma\,\mu_1
        \quad \text{$\lambda_1$-a.e.\ in } E,
    \end{equation*}        
    which contradicts \eqref{eq:KantProbL2OptSystem-b}. Hence the definition of $R$ in \eqref{eq:defN0} along with the definition of $M$ and $K$ being the bounds from Lemma~\ref{lem:L1bound} and \eqref{eq:a1plusbound} 
    yields the existence of a constant $C>0$ such that
    \begin{equation*}
        (\alpha_1)_- \leq R \leq 
        C \big( \|\mu_1\|_{L^2(\Omega_1)}\|\mu_2\|_{L^2(\Omega_2)}  + \|c\|_{L^2(\Omega)} +1 \big)^{6} 
        \quad \text{$\lambda_1$-a.e.\ in }\Omega_1,
    \end{equation*}
    i.e., we actually even obtain an $L^\infty$-bound for the negative part. 
    Note that, since $R$ depends on $\gamma$, $\delta$, and $\cbound$, the same holds for the above constant.
    Again, the estimate for $(\alpha_2)_-$ follows by means of reversed roles. 
    Combining the results for the positive and negative part finally proves the lemma.
\end{proof}

With the above boundedness results we are now in a position to prove the (strong) continuity 
of $\SS_\gamma$. For this purpose, let us define the following sets:
\begin{align}
    \HH(\Omega) & \coloneqq L^2(\Omega) \times L^2(\Omega_1) \times L^2(\Omega_2),\\
    \CC_{\cbound}(\Omega)
    & \coloneqq \{ c\in L^2(\Omega) \colon c \geq \cbound
    \text{ $\lambda$-a.e.\ in } \Omega\}, \\
    \MM_\delta^m(\Omega) & \coloneqq \big\{ (\mu_1, \mu_2) \in L^2(\Omega_1) \times L^2(\Omega_2) \colon 
    \begin{aligned}[t]
        & \|\mu_1\|_{L^1(\Omega_1)} = \|\mu_2\|_{L^1(\Omega_2)} = m, \\[-0.5ex]
        & \;\mu_i \geq \delta \text{ $\lambda_i$-a.e.\ in } \Omega_i, \; i=1,2 \big\}.
    \end{aligned}\label{eq:defMMdelta}
\end{align}

\begin{proposition}\label{prop:hoelder}
    Let  $\gamma, \delta, m > 0$ and $\cbound > -\infty$ be given. 
    Then the solution operator of the regularized Kantorovich problem \eqref{eq:KPgam} is 
    H\"older continuous with exponent ${}^1\!/\!{}_2$ on bounded sets in the following sense:
    For all $M > 0$, there exists a constant $L > 0$, depending on $M$, $\gamma$, $\delta$, $m$, and $\cbound$, 
    such that 
    \begin{equation*}
        \| \SS_\gamma(c_\mu, \mu_1, \mu_2) - \SS_\gamma(c_\nu, \nu_1, \nu_2)\|_{L^2(\Omega)}
        \leq L\, \|(c_\mu, \mu_1, \mu_2) - (c_\nu, \nu_1, \nu_2)\|_{\HH(\Omega)}^{{}^1\!/\!{}_2}
    \end{equation*}
    for all $(c_\mu, \mu_1, \mu_2), (c_\nu, \nu_1, \nu_2) \in \CC_{\cbound}(\Omega) \times \MM_\delta^m(\Omega)$ 
    with $$\|(c_\mu, \mu_1, \mu_2)\|_{\HH(\Omega)}, \|(c_\nu, \nu_1, \nu_2)\|_{\HH(\Omega)} \leq M.$$
\end{proposition}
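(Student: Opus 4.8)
The plan is to compare the two solutions via the optimality system \eqref{eq:KantProbL2OptSystem}, using the zero-mean dual solutions supplied by Lemma~\ref{lem:quadreg} together with the a priori bounds from Lemmas~\ref{lem:piest}, \ref{lem:L1bound}, and \ref{lem:L2bound}. Write $\pi_\mu \coloneqq \SS_\gamma(c_\mu,\mu_1,\mu_2)$ and $\pi_\nu \coloneqq \SS_\gamma(c_\nu,\nu_1,\nu_2)$, and let $(\alpha_1,\alpha_2)$ and $(\beta_1,\beta_2)$ be the respective zero-mean dual solutions, so that $\pi_\mu = \tfrac1\gamma(\alpha_1\oplus\alpha_2-c_\mu)_+$ and likewise for $\pi_\nu$. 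First I would test the difference of the two marginal equations \eqref{eq:KantProbL2OptSystem-b}, \eqref{eq:KantProbL2OptSystem-c} against $\alpha_1-\beta_1$ and $\alpha_2-\beta_2$, respectively, and add. On the right this produces $\int_{\Omega_1}(\mu_1-\nu_1)(\alpha_1-\beta_1)\,\d\lambda_1 + \int_{\Omega_2}(\mu_2-\nu_2)(\alpha_2-\beta_2)\,\d\lambda_2$, which is controlled by the data difference times $\|\alpha_1-\beta_1\|_{L^2(\Omega_1)}+\|\alpha_2-\beta_2\|_{L^2(\Omega_2)}$; by Lemma~\ref{lem:L2bound} and the bound $M$, the latter is bounded by a constant depending only on $M,\gamma,\delta,m,\cbound$. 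On the left, after substituting \eqref{eq:KantProbL2OptSystem-a}, one gets $\int_\Omega(\pi_\mu-\pi_\nu)\big((\alpha_1\oplus\alpha_2-c_\mu)-(\beta_1\oplus\beta_2-c_\nu)\big)\,\d\lambda$; writing $u\coloneqq\alpha_1\oplus\alpha_2-c_\mu$ and $v\coloneqq\beta_1\oplus\beta_2-c_\nu$ so that $\pi_\mu=\tfrac1\gamma u_+$, $\pi_\nu=\tfrac1\gamma v_+$, this equals $\tfrac1\gamma\int_\Omega(u_+-v_+)(u-v)\,\d\lambda \ge \tfrac1\gamma\|u_+-v_+\|_{L^2(\Omega)}^2 = \gamma\|\pi_\mu-\pi_\nu\|_{L^2(\Omega)}^2$, using the standard monotonicity estimate $(u_+-v_+)(u-v)\ge(u_+-v_+)^2$ pointwise.

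Combining these two computations yields
\begin{equation*}
    \gamma\,\|\pi_\mu-\pi_\nu\|_{L^2(\Omega)}^2
    \le \big(\|\mu_1-\nu_1\|_{L^2(\Omega_1)}+\|\mu_2-\nu_2\|_{L^2(\Omega_2)}\big)\,
    \big(\|\alpha_1-\beta_1\|_{L^2(\Omega_1)}+\|\alpha_2-\beta_2\|_{L^2(\Omega_2)}\big).
\end{equation*}
There is, however, a loose end: the first identity also contributes a term involving the cost difference, because $u-v = (\alpha_1-\beta_1)\oplus(\alpha_2-\beta_2) - (c_\mu-c_\nu)$, so the left-hand side is really $\tfrac1\gamma\int_\Omega(u_+-v_+)\big((\alpha_1-\beta_1)\oplus(\alpha_2-\beta_2)\big)\,\d\lambda$, which differs from what I want by $\tfrac1\gamma\int_\Omega(u_+-v_+)(c_\mu-c_\nu)\,\d\lambda$. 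The latter is bounded by $\|\pi_\mu-\pi_\nu\|_{L^2(\Omega)}\|c_\mu-c_\nu\|_{L^2(\Omega)}$, which after a Young's inequality can be absorbed into $\tfrac\gamma2\|\pi_\mu-\pi_\nu\|_{L^2(\Omega)}^2$ plus a multiple of $\|c_\mu-c_\nu\|_{L^2(\Omega)}^2$. So one arrives at
\begin{equation*}
    \tfrac\gamma2\,\|\pi_\mu-\pi_\nu\|_{L^2(\Omega)}^2
    \le C_M\,\|(c_\mu,\mu_1,\mu_2)-(c_\nu,\nu_1,\nu_2)\|_{\HH(\Omega)}
      + \tfrac{1}{2\gamma}\|c_\mu-c_\nu\|_{L^2(\Omega)}^2,
\end{equation*}
and since the data difference is bounded by $2M$, the quadratic term on the right is itself dominated by a constant times the first power of the norm; taking square roots gives the claimed Hölder-$\tfrac12$ estimate with $L$ depending on $M,\gamma,\delta,m,\cbound$.

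The main obstacle I anticipate is \emph{not} the monotonicity argument — that is the routine heart of the proof — but rather making sure the dual variables $\alpha_i,\beta_i$ can genuinely be bounded uniformly on the prescribed bounded set: this is exactly why Lemma~\ref{lem:L2bound} is invoked, and one must check that its hypotheses (strict positivity $\mu_i\ge\delta$, lower bound $c\ge\cbound$, fixed total mass $m$) are in force for both data points, which they are by the assumption $(c_\cdot,\cdot,\cdot)\in\CC_{\cbound}(\Omega)\times\MM_\delta^m(\Omega)$. A secondary subtlety is that the zero-mean normalization \eqref{eq:vanishmean} is what prevents the (non-unique) dual variables from drifting, so it is important to fix that gauge for both $(\alpha_1,\alpha_2)$ and $(\beta_1,\beta_2)$ before taking differences; any other choice would destroy the uniform bound. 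Everything else — the testing, the pointwise inequality $(u_+-v_+)(u-v)\ge(u_+-v_+)^2$, Cauchy–Schwarz, and Young's inequality — is standard bookkeeping.
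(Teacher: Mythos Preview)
Your proposal is correct and follows essentially the same approach as the paper: test the difference of the marginal equations against the difference of the zero-mean dual solutions, apply the pointwise monotonicity inequality $(a_+-b_+)(a-b)\ge(a_+-b_+)^2$, absorb the cost-difference cross term via Young's inequality, and bound $\|\alpha_i-\beta_i\|_{L^2}$ using Lemma~\ref{lem:L2bound}. The only cosmetic difference is that the paper arranges the algebra so that the $c_\mu-c_\nu$ term appears on the right-hand side from the outset rather than being split off afterwards, and it leaves implicit the observation (which you spell out) that on the bounded set $\|c_\mu-c_\nu\|_{L^2}^2\le 2M\,\|c_\mu-c_\nu\|_{L^2}$.
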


\begin{proof}
    The result is more or less a straight forward consequence of the previous boundedness results.
    Let $(c_\mu, \mu_1, \mu_2), (c_\nu, \nu_1, \nu_2) \in (\CC_{\cbound}(\Omega) \times \MM_\delta^m(\Omega)) 
    \cap \overline{B_{\HH(\Omega)}(0,M)}$ be given.
    We set $\pi_\mu \coloneqq S_\gamma(c_\mu,\mu_1,\mu_2)$ and $\pi_\nu \coloneqq  S_\gamma(c_\nu,\nu_1,\nu_2)$
    and denote the associated zero-mean dual solutions by $(\alpha_1^\mu, \alpha_2^\mu), (\alpha_1^\nu, \alpha_2^\nu) \in L^2(\Omega_1) \times L^2(\Omega_2)$. The equality constraints in \eqref{eq:KPgam} imply
    \begin{equation}\label{eq:KantProbSoluDiff}
        \int_{\Omega_2} (\pi_\mu-\pi_\nu)\,\d{\lambda_2}=\mu_1-\nu_1, \quad
        \int_{\Omega_1} (\pi_\mu-\pi_\nu)\,\d{\lambda_1}=\mu_2-\nu_2.
    \end{equation}
    Testing the first and second equation in \eqref{eq:KantProbSoluDiff} with 
    $\alpha_1^\mu-\alpha_1^\nu$ and $\alpha_2^\mu-\alpha_2^\nu$, respectively, 
    integrating and then adding the resulting equations, we arrive at
    \begin{equation*}
    \begin{aligned}
        & \int_{\Omega} (\pi_\mu-\pi_\nu)
        \Big(\frac1\gamma(\alpha_1^\mu\oplus\alpha_2^\mu-c_\mu)
        - \frac1\gamma(\alpha_1^\nu\oplus\alpha_2^\nu-c_\nu)\Big)\d\lambda \\
        & \quad = \frac1\gamma
        \Big(\int_{\Omega_1}(\mu_1-\nu_1)(\alpha_1^\mu-\alpha_1^\nu)\d{\lambda_1}
        +\int_{\Omega_2}(\mu_2-\nu_2)(\alpha_2^\mu-\alpha_2^\nu)\d{\lambda_2} \\
        & \qquad\qquad - \int_\Omega(\pi_\mu-\pi_\nu)(c_\mu-c_\nu)\d\lambda \Big) .
    \end{aligned}
    \end{equation*}
    Using \eqref{eq:KantProbL2OptSystem-a}, the inequality $(a_+-b_+)^2\leq(a_+-b_+)(a-b)$ for all $a,b\in\R$,
    and Young's inequality, we arrive at 
    \begin{equation*}
        \|\pi_\mu - \pi_\nu\|_{L^2(\Omega)}^2
        \leq C\Big( \sum_{i=1}^2 \|\alpha_i^\mu - \alpha_i^\nu\|_{L^2(\Omega_i)} \|\mu_i -  \nu_i\|_{L^2(\Omega_i)} 
        + \|c_\mu - c_\nu\|_{L^2(\Omega)}^2 \Big).
    \end{equation*}        
    By Lemma~\ref{lem:L2bound}, there holds
    \begin{equation*}
        \|\alpha_i^\mu - \alpha_i^\nu\|_{L^2(\Omega_i)}
        \leq \|\alpha_i^\mu\|_{L^2(\Omega_i)} + \|\alpha_i^\nu\|_{L^2(\Omega_i)}
        \leq C (M+1)^{6}
    \end{equation*}
    with a constant $C$ depending on $\gamma$, $\delta$, $m$, and $\cbound$. 
    Inserting this in the above inequality then gives the result.
\end{proof}

Some words concerning the above results are in order. First, due to their non-uniqueness, 
it is clear that one cannot expect an analogous result for the dual variables, even if we restrict to the 
zero-mean dual solutions. 
Moreover, an inspection of the foregoing analysis reveals that the constant $L$ from Proposition~\ref{prop:hoelder} 
tends to infinity, if $\gamma$ and/or $\delta$ converge to zero or if $\cbound$ approaches $-\infty$.
This is somewhat to be expected, since one looses regularity as $\gamma$ tends to zero and 
there is in general no existence of an optimal transport plan, if there is no lower bound for the cost.

\subsection{Existence of Optimal Solutions}\label{sec:globopt}

Based on the H\"older continuity result, we are now in the position to establish the existence of 
solutions to \eqref{eq:BKgam}. For this purpose, we return to the notation from the beginning of this section, 
i.e. in particular, given $c\in C(\Omega)$ and $\mu_1\in \Measure(\Omega_1)$ with $\mu_1 \geq 0$ and
$\|\mu_1\|_{\Measure(\Omega_1)} = 1$, we set
$\pi_\gamma \coloneqq  
\EE_\delta^* \, \SS_\gamma( \EE_\delta\,c, \TT_1^\delta(\mu_1), \TT_2^\delta(\marg)) \in \Measure(\Omega)$. 

\begin{theorem}\label{thm:BKgamexist}
    Let $\gamma > 0$ and $\delta > 0$ be fixed.
    There exists at least one globally optimal solution to the regularized bilevel Kantorovich problem \eqref{eq:BKgam}.
\end{theorem}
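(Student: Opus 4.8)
The plan is to apply the direct method of the calculus of variations, using Proposition~\ref{prop:hoelder} to handle the lower-level solution operator. First I would take a minimizing sequence $\{(\pi_\gamma^k, \mu_1^k, c_k)\}_{k\in\N}$ for \eqref{eq:BKgam}; since the constant zero cost function together with, say, $\mu_1 = |\Omega_1|^{-1}\lambda_1$ gives a feasible point (once one checks $\TT_1^\delta$ and $\TT_2^\delta$ map into $\MM_0(\Omega_\delta)$, which is the first small task), the infimum is $< +\infty$. The penalty term $\tfrac{1}{p\gamma}\|c_k - \cost\|_{W^{1,p}(\Omega)}^p$ forces $\{c_k\}$ to be bounded in $W^{1,p}(\Omega)$, hence—by the compact embedding $W^{1,p}(\Omega) \embed C(\Omega)$—precompact in $C(\Omega)$; after extracting a subsequence we get $c_k \to \bar c$ strongly in $C(\Omega)$ with $\bar c \in W^{1,p}(\Omega)$ (by weak closedness of the bounded set, passing to a further subsequence so the weak-$W^{1,p}$ limit coincides with $\bar c$). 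The constraint $\|\mu_1^k\|_{\Measure(\Omega_1)} = 1$ with $\mu_1^k \geq 0$ gives a bounded sequence in $\Measure(\Omega_1)$, so a further subsequence converges weak-$\ast$ to some $\bar\mu_1 \in \Measure(\Omega_1)$; weak-$\ast$ lower semicontinuity of the total variation norm together with testing against the constant function $1\in C(\Omega_1)$ yields $\bar\mu_1 \geq 0$ and $\|\bar\mu_1\|_{\Measure(\Omega_1)} = 1$, so the limit is feasible for the upper-level constraints.

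Next I would pass to the limit in the state equation $\pi_\gamma^k = \EE_\delta^*\,\SS_\gamma(\EE_\delta c_k, \TT_1^\delta(\mu_1^k), \TT_2^\delta(\marg))$. The key point is that the smoothing operators $\TT_i^\delta$ turn weak-$\ast$ convergence into strong $L^2$ convergence: since $\varphi_i^\delta \in C_c^\infty(\R^{d_i})$, the map $\mu \mapsto \varphi_i^\delta \ast \mu$ from $(\Measure(\Omega_i), \text{weak-}\ast)$ into $L^2(\Omega_i^\delta)$ is sequentially continuous — indeed $(\varphi_i^\delta \ast \mu_1^k)(x) = \int_{\Omega_i}\varphi_i^\delta(x-y)\,\d\mu_1^k(y) \to \int_{\Omega_i}\varphi_i^\delta(x-y)\,\d\bar\mu_1(y)$ pointwise with a uniform bound, and a dominated-convergence / equi-continuity argument upgrades this to strong $L^2(\Omega_1^\delta)$ convergence. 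Hence $\TT_1^\delta(\mu_1^k) \to \TT_1^\delta(\bar\mu_1)$ in $L^2(\Omega_1^\delta)$, while $\TT_2^\delta(\marg)$ is fixed; similarly $\EE_\delta c_k \to \EE_\delta \bar c$ in $L^2(\Omega_\delta)$ because $c_k \to \bar c$ uniformly. One must check that $(\TT_1^\delta(\mu_1^k), \TT_2^\delta(\marg))$ and their limit lie in $\MM_0(\Omega_\delta)$, i.e. nonnegativity (clear from $\mu_1^k, \marg \geq 0$ plus the positive shift) and the mass-matching $\|\TT_1^\delta(\mu_1^k)\|_{L^1} = \|\TT_2^\delta(\marg)\|_{L^1}$, which holds because $\int_{\Omega_i^\delta}\varphi_i^\delta \ast \mu_i = \|\mu_i\|_{\Measure(\Omega_i)}$ (mollification preserves mass when extending by zero) and $\|\mu_1^k\|_{\Measure(\Omega_1)} = 1 = \|\marg\|_{\Measure(\Omega_2)}$, with the constant shift contributing $\delta$ to each side.

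Now Proposition~\ref{prop:hoelder} applies: the data $(\EE_\delta c_k, \TT_1^\delta(\mu_1^k), \TT_2^\delta(\marg))$ lie in $\CC_{\cbound}(\Omega_\delta) \times \MM_\delta^m(\Omega_\delta)$ for $m = 1 + \delta$, for a lower bound $\cbound$ coming from the uniform bound on $\|c_k\|_{C(\Omega)}$ and a positivity constant $\delta$ coming from the constant shift in $\TT_i^\delta$ (this needs a mild notational reconciliation, since the paper uses $\delta$ both for the mollification radius and for the positivity constant in Lemma~\ref{lem:quadreg}; the actual positivity constant of $\TT_i^\delta(\mu_i)$ is $\delta/|\Omega_i^\delta|$), and they are bounded in $\HH(\Omega_\delta)$ uniformly in $k$. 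Hölder continuity then gives $\SS_\gamma(\EE_\delta c_k, \TT_1^\delta(\mu_1^k), \TT_2^\delta(\marg)) \to \SS_\gamma(\EE_\delta \bar c, \TT_1^\delta(\bar\mu_1), \TT_2^\delta(\marg))$ strongly in $L^2(\Omega_\delta)$, and applying the bounded operator $\EE_\delta^*$ yields $\pi_\gamma^k \to \bar\pi_\gamma := \EE_\delta^*\,\SS_\gamma(\EE_\delta \bar c, \TT_1^\delta(\bar\mu_1), \TT_2^\delta(\marg))$ in $\Measure(\Omega)$ (norm convergence), in particular weak-$\ast$. Thus $(\bar\pi_\gamma, \bar\mu_1, \bar c)$ is feasible for \eqref{eq:BKgam}. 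Finally, $\JJ$ is weak-$\ast$ lower semicontinuous and the penalty term is weak-$W^{1,p}$ lower semicontinuous (convexity of the norm), so $\JJ_\gamma(\bar\pi_\gamma, \bar\mu_1, \bar c) \leq \liminf_k \JJ_\gamma(\pi_\gamma^k, \mu_1^k, c_k) = \inf \eqref{eq:BKgam}$, which shows the limit is a global minimizer. I expect the main obstacle to be purely bookkeeping rather than conceptual: carefully verifying the strong $L^2$ continuity of $\mu \mapsto \TT_i^\delta(\mu)$ on weak-$\ast$ bounded sets and checking that all the hypotheses of Proposition~\ref{prop:hoelder} (membership in $\CC_{\cbound} \times \MM_\delta^m$ and a uniform $\HH$-bound) are met with constants independent of $k$ — the latter requiring one to track how the fixed parameters $\gamma$ and $\delta$ enter, and to disentangle the overloaded symbol $\delta$.
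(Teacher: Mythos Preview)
Your approach is essentially the same as the paper's, and the argument is correct in outline. There is, however, one step that needs more care: you write that the penalty term $\tfrac{1}{p\gamma}\|c_k - \cost\|_{W^{1,p}(\Omega)}^p$ forces $\{c_k\}$ to be bounded in $W^{1,p}(\Omega)$, but this only follows once you know that $\JJ(\pi_\gamma^k, \mu_1^k)$ is bounded from below along the minimizing sequence. The standing assumption on $\JJ$ is merely weak-$\ast$ lower semicontinuity, not boundedness from below, so a priori $\JJ(\pi_\gamma^k, \mu_1^k)$ could drift to $-\infty$ while $\|c_k - \cost\|_{W^{1,p}}^p \to +\infty$ with the sum still tending to the infimum. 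The paper closes this gap by first observing that $\|\mu_1^k\|_{\Measure(\Omega_1)} = 1$ and $\|\pi_\gamma^k\|_{\Measure(\Omega)} \leq 1 + \delta$ (the latter from the marginal constraint in \eqref{eq:KPgam}), so both sequences lie in a weak-$\ast$ sequentially compact set; weak-$\ast$ lower semicontinuity of the real-valued functional $\JJ$ on such a set then yields $\inf_k \JJ(\pi_\gamma^k, \mu_1^k) > -\infty$, and only after this is the boundedness of the penalty term, and hence of $c_k$, justified. Apart from this point, your proof matches the paper's line by line.
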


\begin{proof}
    Based on Proposition~\ref{prop:hoelder}, we can apply the direct method of the calculus of variations.
    First, we note that, for all $\mu_1 \in \Measure(\Omega_1)$, $\mu_2 \in \Measure(\Omega_2)$ with 
    $\|\mu_1\|_{\Measure(\Omega_1)} = \|\mu_2\|_{\Measure(\Omega_2)}$ and 
    $\mu_i \geq 0$, $i=1,2$, there holds $\varphi_i^\delta \ast \mu_i \geq 0$ $\lambda_i$-a.e.\ in 
    $\Omega_i^\delta$ and thus
    \begin{equation}
    \begin{aligned}
        \|\TT_1^\delta(\mu_1) \|_{L^1(\Omega_1^\delta)}
        & = \|\varphi_1^\delta\|_{L^1(\R^{d_1})} \|\mu_1\|_{\Measure(\Omega_1)} + \delta \\
        & = \|\varphi_2^\delta\|_{L^1(\R^{d_2})} \|\mu_2\|_{\Measure(\Omega_2)} + \delta 
        = \|\TT_2^\delta(\mu_2) \|_{L^1(\Omega_2^\delta)} \eqqcolon m.
    \end{aligned}
    \end{equation}
    Hence,
    $(\TT_1^\delta(\mu_1), \TT_2^\delta(\marg)) \in \MM_0^m(\Omega_\delta)$ so that $\SS_\gamma( \EE_\delta\,c, \TT_1^\delta(\mu_1), \TT_2^\delta(\marg))$ is well defined 
    for every $c\in C(\Omega)$. Consequently, the feasible set of \eqref{eq:BKgam} is nonempty
    and thus there is an infimal sequence $\{(\pi_\gamma^n, \mu_1^n, c_n)\}_{n\in\N} 
    \subset \Measure(\Omega) \times \Measure(\Omega_1) \times W^{1,p}(\Omega)$, i.e., 
    \begin{equation}
        \JJ_\gamma(\pi_\gamma^n, \mu_1^n, c_n) \to \inf\eqref{eq:BKgam}  \in \R \cup \{-\infty\} \quad
        \text{as } n\to \infty, 
    \end{equation}
    where $\inf\eqref{eq:BKgam}$ denotes the minimal value of \eqref{eq:BKgam}. 
    By the constraint in \eqref{eq:BKgam}, the sequence $\{\mu_1^n\}_{n\in\N}$ is bounded in $\Measure(\Omega_1)$. 
    Moreover, due to the constraints in \eqref{eq:KPgam},
    \begin{equation*}
        \|\pi_\gamma^n\|_{\Measure(\Omega)}
        \leq \|\SS_\gamma( \EE_\delta\,c_n, \TT_1^\delta(\mu_1^n), \TT_2^\delta(\marg))\|_{L^1(\Omega_\delta)}   
         = \int_{\Omega_2^\delta} \varphi_2^\delta \ast \marg \,\d\lambda_2 + \delta
         = 1 + \delta. 
    \end{equation*}
    Therefore, $\{\mu_1^n\}$ and $\{\pi_\gamma^n\}$ are both bounded in $\Measure(\Omega_1)$ 
    and $\Measure(\Omega)$, respectively, and thus, the presupposed weak-$\ast$ lower semicontinuity of $\JJ$ implies that 
    \begin{equation*}
        \inf_{n\in \N} \JJ(\pi_\gamma^n, \mu_1^n) \eqqcolon j > -\infty.
    \end{equation*}
    Consequently, for $n\in \N$ large enough,  
    \begin{equation*}
        \|c_n - \cost\|_{W^{1,p}(\Omega)}^p 
        \leq p\gamma \big(\max\{\inf \eqref{eq:BKgam} +1,0\} - j\big)
    \end{equation*}
    such that $\{c_n\}$ is bounded in $W^{1,p}(\Omega)$.
    Therefore, there is a subsequence, denoted for simplicity by the same symbol, such that 
    \begin{equation}\label{eq:weakconv}
        (\pi_\gamma^n, \mu_1^n) \weak^* (\bar\pi, \bar \mu_1) 
        \quad \text{in } \Measure(\Omega) \times \Measure(\Omega_1), \quad 
        c_n \weak \bar c \quad \text{in } W^{1,p}(\Omega).
    \end{equation}
    The set $\{\mu_1\in \Measure(\Omega_1) \colon \mu_1 \geq 0\}$ is easily seen to be weakly-$\ast$ closed.
    Hence $\bar\mu_1 \geq 0$ and thus 
    \begin{equation*}
        1 = \|\mu_1^n\|_{\Measure(\Omega_1)} 
        = \int_{\Omega_1} \d\mu_1^n \to \int_{\Omega_1}\d \bar\mu_1 = \|\bar\mu_1\|_{\Measure(\Omega_1)}.
    \end{equation*} 
    Therefore, the weak limit $\bar\mu_1$ satisfies the constraints in \eqref{eq:BKgam}.    
    Moreover, since $p>d$, the embedding $W^{1,p}(\Omega) \embed C(\Omega)$ is compact, 
    and thus $c_n \to \bar c$ uniformly in $\Omega$. Since $\Omega$ is compact, there is a constant 
    $\cbound > -\infty$ such that $\inf_{x\in \Omega} c_n(c) \geq \cbound$ for all $n\in \N$
    as well as $\inf_{x\in \Omega}\bar c(x) \geq \cbound$.
    Hence, there holds $\EE_\delta \bar c, \EE_\delta c_n \in \CC_{\cbound}(\Omega_\delta)$ for all $n\in \N$ 
    and the uniform convergence implies 
    \begin{equation*}
        \EE_\delta c_n \to \EE_\delta \bar c \quad \text{in } L^2(\Omega_\delta).
    \end{equation*}
    Furthermore, the complete continuity of the convolution yields
    \begin{equation*}
        \TT_1^\delta(\mu_1^n) \to \TT_1^\delta(\bar\mu_1) 
        \quad \text{in } L^2(\Omega_1^\delta).
    \end{equation*}
    Since $\mu_1^n, \bar\mu_1 \geq 0$ and $\marg \geq 0$, we have 
    $\varphi_1^\delta\ast \mu_1^n, \varphi_1^\delta \ast \bar\mu_1 \geq 0$ $\lambda_1$-a.e.\ in 
    $\Omega_1^\delta$ and $\varphi_2^\delta \ast \marg \geq 0$ $\lambda_2$-a.e.\ in $\Omega_2^\delta$ and 
    therefore thanks to the constant shift in \eqref{eq:defTT}
    \begin{equation*}
        (\TT_1^\delta(\mu_1^n), \TT_2^\delta(\marg)) \in \MM_{\delta^\prime}^m(\Omega_\delta) \quad \forall\, n\in \N, 
        \quad 
        (\TT_1^\delta(\bar\mu_1), \TT_2^\delta(\marg)) \in \MM_{\delta^\prime}^m(\Omega_\delta),         
    \end{equation*}
    where $\delta^\prime \coloneqq  \max\{|\Omega_1^\gamma|, |\Omega_2^\gamma|\}^{-1}\,\delta$.
    Therefore, we are allowed to apply the H\"older continuity result from Proposition~\ref{prop:hoelder}, which gives
    \begin{equation*}
        \SS_\gamma( \EE_\delta\,c_n, \TT_1^\delta(\mu_1^n), \TT_2^\delta(\marg))
        \to \SS_\gamma( \EE_\delta\,\bar c, \TT_1^\delta(\bar\mu_1), \TT_2^\delta(\marg))
        \quad \text{in } L^2(\Omega_\delta).
    \end{equation*}
    The continuity of the restriction operator $\EE_\delta^*$ and the first convergence in \eqref{eq:weakconv} 
    then imply 
    \begin{equation*}
        \bar\pi = \EE_\delta^* \,\SS_\gamma( \EE_\delta\,\bar c, \TT_1^\delta(\bar\mu_1), \TT_2^\delta(\marg))
    \end{equation*}
    such that the weak limit $(\bar\pi, \bar\mu_1, \bar c)$ is feasible for \eqref{eq:BKgam}. 
    The mapping $W^{1,p}(\Omega) \ni c \mapsto \|c - \cost\|_{W^{1,p}(\Omega)}^p$ is continuous and convex and therefore weakly lower semicontinuous. Additionally, $\JJ$ was assumed to be weakly-$\ast$ lower semicontinuous. Altogether, the convergence in 
    \eqref{eq:weakconv} yields
    \begin{equation*}
        \JJ_\gamma(\bar\pi, \bar\mu_1, \bar c)
        \leq \liminf_{n\to\infty} \JJ_\gamma(\pi_n, \mu_1^n, c_n) = \inf\eqref{eq:BKgam},
    \end{equation*}
    ensuring the optimality of the weak limit.
\end{proof}


\bibliographystyle{plain}
\bibliography{refs}

\end{document}